\newcommand{\brk}[1]{\left(#1\right)}          
\newcommand{\Brk}[1]{\left[#1\right]}          
\newcommand{\BRK}[1]{\left\{#1\right\}}        
\newcommand{\beq}{\begin{equation}}
\newcommand{\eeq}{\end{equation}}
\newcommand{\bbR}{{\mathbb R}}
\providecommand{\R}{\bbR}
\newcommand{\bbN}{{\mathbb N}}
\newcommand{\calM}{{\mathcal M}}
\newcommand{\calA}{\mathcal{A}}
\newcommand{\textand}{\quad\text{ and }\quad}
\newcommand{\Textand}{\qquad\text{ and }\qquad}
\newcommand{\defref}[1]{Definition~\ref{#1}}
\newcommand{\lemref}[1]{Lemma~\ref{#1}}
\newtheorem{theorem}{Theorem}[section]
\newtheorem{lemma}[theorem]{Lemma}
\newtheorem{proposition}[theorem]{Proposition}
\newtheorem{corollary}[theorem]{Corollary}
\newtheorem{definition}[theorem]{Definition}
\newenvironment{proof}{{\flushleft \emph{Proof}:}}{\hfill\ding{110}}
\newenvironment{remark}{{\flushleft \fontfamily{pzc}\bfseries\large Remark:}}{}
\newcommand{\g}{\mathfrak{g}}
\newcommand{\h}{\mathfrak{h}}
\newcommand{\euc}{\mathfrak{e}}
\newcommand{\Vol}{d\text{Vol}}
\newcommand{\Volume}{d\text{Vol}_\g}
\newcommand{\Volumeh}{d\text{Vol}_{\mathfrak{h}}}
\newcommand{\textVol}{\text{Vol}}
\newcommand{\M}{{\calM}}
\newcommand{\N}{\mathcal{N}}
\newcommand{\id}{{\text{Id}}}
\newcommand{\vp}{\varphi}
\newcommand{\dist}{\operatorname{dist}}
\newcommand{\SO}[1]{\text{SO}(#1)}
\newcommand{\limn}{\lim_{n\to\infty}}
\newcommand{\tR}{\tilde{R}}
\newcommand{\tM}{\tilde{\M}}
\newcommand{\tH}{\tilde{H}}
\newcommand{\hH}{\hat{H}}
\newcommand{\hK}{\hat{K}}
\newcommand{\e}{\varepsilon}
\newcommand{\pl}{\partial}
\newcommand{\dis}{\operatorname{dis}}
\newcommand{\Raz}[1]{{\color{blue} #1}}
\newcommand{\Cy}[1]{{\color{red} #1}}
\renewcommand{\Cy}[1]{{ #1}}
\numberwithin{equation}{section}
\begin{document}

\title{The emergence of torsion in the continuum limit of distributed \Cy{edge-}dislocations}
\author{Raz Kupferman and Cy Maor}
\date{}
\maketitle

\begin{abstract}
We present a rigorous homogenization theorem for distributed \Cy{edge-}dislocations.
We construct a sequence of locally-flat \Cy{2D} Riemannian manifolds with dislocation-type singularities. We show that this sequence converges, as the dislocations become denser, to a flat non-singular Weitzenb\"ock manifold, i.e. a flat manifold endowed with a metrically-consistent connection with zero curvature and non-zero torsion.
In the process, we introduce a new notion of convergence of Weitzenb\"ock manifolds, which is relevant to this class of homogenization problems.
\end{abstract}

\tableofcontents

\section{Introduction}

\paragraph{Manifolds with dislocations}
The study of defects in solids with imperfections is a longstanding theme in material science.
There exists a wide range of prototypical crystalline defects, among which are dislocations, disclinations and point defects (see Kr\"oner \cite{Kro81} for a classical review).
The common practice in crystallography is to identify and quantify defects of dislocation-type  via \emph{Burgers circuits}, 
which are based on discrete steps with respect a local crystalline structure. Defects are quantified by  the \emph{Burgers vector}, which is a discrepancy between closed loop in real space and closed loops in the discrete ``ideal" crystallographic space.

Dislocations have also been considered in the context of amorphous materials. More than a century ago, Volterra constructed a variety of defects using ``cut-and-weld" procedures \cite{Vol07}.  A Burgers vector arises naturally in this context too, with the crystallographic structure replaced by the Riemannian metric and its associated parallel transport \cite{OY14}.
Recently, Kupferman et al. \cite{KMS14} introduced a general approach to describe isolated defects in amorphous materials, using the differential geometric notion of monodromy in affine manifolds. The Burgers vector is identified with the translational component of the monodromy, whereas its rotational component quantifies the magnitude of disclination-type defects (the Frank vector).

In the above-mentioned approaches to isolated defects, 
the continuum is modeled as a topological manifold, smooth everywhere except at the loci of the defects. The smooth part of the manifold is endowed with a locally-flat Riemannian metric. The defects, which are the singularities of the topological manifold, manifest  through the properties of the Riemannian (Levi-Civita) parallel transport. The important observation is that when considering isolated defects in amorphous materials, the Riemannian structure is the only structure imposed on the material manifold.

\paragraph{Continuously distributed dislocations}
It is customary in material science to consider materials with distributed defects. In the spirit of continuum mechanics, bodies with distributed dislocations were modeled as smooth manifolds, starting in the 1950s with the pioneering work of Nye \cite{Nye53}, \Cy{Kondo \cite{Kon55}} and Bilby et al. \cite{BBS55,BS56}. In these works, the singularities were smoothed out, resulting in a manifold endowed with a flat metric, and in addition, a \emph{torsion field} that represents the Burgers vector density. In other words, the presence of distributed dislocations was modeled by an additional geometric structure imposed on the material manifold. 

This classical modelling of distributed dislocations is  phenomenological.
A natural question is in what sense does torsion emerge in the continuum limit of discretely distributed dislocations. That is, one would hope to obtain torsion as a homogenization limit of an increasingly large number of discrete dislocations.

\paragraph{Outline of results}

In this paper we construct a sequence of manifolds with isolated dislocations, such that the dislocations become increasingly dense, while their total magnitude remains fixed.
We show the convergence of both metric and parallelism. (i) The sequence converges as a sequence of metric spaces to a flat, simply-connected Riemannian manifold. (ii) The sequence converges as a sequence of manifolds with connections. The Levi-Civita connections converge in a weak sense to a  metrically-consistent \emph{non-symmetric} connection. This means that a torsion field arises in a rigorous limit process from torsion-free Riemannian manifolds. 
 This notion of convergence of Weitzenb\"ock manifolds  with connections is, to our knowledge,  new.

\paragraph{Structure of this paper}
In Section \ref{sec:sequence} we describe the construction of a manifold with a single edge-dislocation, and then construct a sequence of manifolds with increasingly dense dislocations. 
In Section \ref{sec:conv_example} we prove that this sequence of manifolds converges to a Weitzenb\"ock manifold (a Riemannian manifold endowed with a metrically-consistent, flat, non-symmetric connection). 
This example leads us in Section \ref{sec:conv} to a definition of convergence of Weitzenb\"ock manifolds. We prove that this notion of convergence is well-defined.
Finally, we discuss in Section \ref{sec:discussion} the properties of the limit manifold, and relate   the limit connection to  Burgers vectors and dislocation line densities.

\paragraph{A note about mechanics and geometry} 
Torsion appears also in a mechanical context, where it is related to symmetries in the constitutive laws (Wang \cite{Wan67}).  The present paper does not consider the mechanical implications of defects. 
The homogenization process described in the paper can be posed, 
in essence, in pure geometric terms. 

\section{A sequence of locally-flat manifolds  with  defects}
\label{sec:sequence}

\subsection{A single edge-dislocation}

Consider the Euclidean plane that undergoes the following Volterra cut-and-weld procedure  \cite{Vol07}:
First, remove a sector of angle $2\theta<\pi$, and glue together (i.e., identify) the two rays that were the boundaries of the sector. This results in a locally-Euclidean surface with a cone singularity at a point which we denote by $p_+$.
Next, choose a point $p_-$ at a distance $d$ from $p_+$, and cut the surface along a ray that starts at $p_-$ and does not pass through $p_+$. Finally, insert into the cut a sector of angle $2\theta$, with its vertex at $p_-$ and its two sides glued to the edges of the cut (see Figure~\ref{fig:volterra}).

\begin{figure}
\begin{center}
\begin{tikzpicture}
	\fill[color=gray!5] (-3,-3.2) -- (7.5,-3.2) -- (7.5,3.2) -- (-3,3.2) -- cycle;
	\tkzDefPoint(0,0){O}
	\tkzDefPoint(-3,0.5){A}
	\tkzDefPoint(-3,-0.5){B}
	\tkzDrawSegment(O,A);
	\tkzDrawSegment(O,B)
	\tkzDrawArc[dotted](O,B)(A)
	\tkzMarkSegment[mark=s||](O,A)
	\tkzMarkSegment[mark=s||](O,B)
	\tkzDrawPoint(O)
	\tkzLabelPoint[above](O){$p_+$}
	\tkzDefPoint(1,0){pm}
	\tkzDrawPoint(pm)
	\tkzLabelPoint[above](pm){$p_-$}
	\tkzDefPoint(5,0){dummy}
	\tkzDrawSegment[dashed](pm,dummy)
	\tkzText(1.7,0){\ding{36}}
	\tkzText(2.5,0){\ding{36}}
	\tkzText(3.3,0){\ding{36}}
	\tkzText(4.1,0){\ding{36}}
	
	\tkzDefPoint(4.5,0){O1}
	\tkzDrawPoint(O1)

	\tkzDefPoint(7.5,0.5){A1}
	\tkzDefPoint(7.5,-0.5){B1}
	\tkzDrawSegment(O1,A1);
	\tkzDrawSegment(O1,B1)
	\tkzDrawArc[dotted](O1,B1)(A1)
	\draw[->] (6.0,0.75) -- (4.0,0.5);
	\draw[->] (6.0,-0.75) -- (4.0,-0.5);
	\tkzLabelPoint[above](O1){$p_-$}

\end{tikzpicture}
\end{center}
\caption{The Volterra cut-and-weld construction of a curvature dipole, or an edge-dislocation. A sector whose vertex is denoted by $p_+$ is removed from the plane and its outer boundaries are glued together, thus forming a cone. The same sector is then inserted into a straight cut along a ray whose endpoint is denoted by $p_-$.}
\label{fig:volterra}
\end{figure}
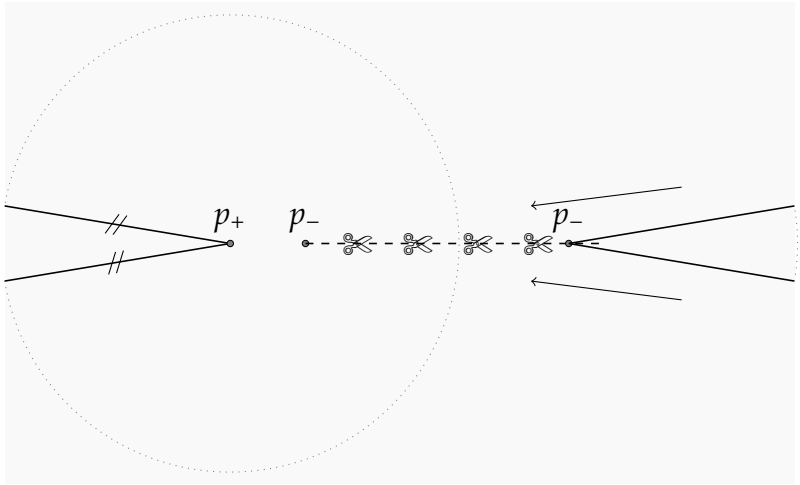

In material-science terminology, we obtain a plane with a pair of disclinations of equal magnitudes and opposite signs.  This pair of disclinations is the isotropic equivalent of a pentagon-heptagon pair in an hexagonal lattice, which is another realization of an edge-dislocation (Seung and Nelson \cite{SN88}, \Cy{see comment in Section~2.2}).

Mathematically, we obtain a simply-connected topological manifold which carries a structure of a complete metric space. The points $p_\pm$ are said to carry a cone singularity: $p_+$ is the vertex of a cone and $p_-$ is the vertex of an anti-cone.
Removing the singular points $p_\pm$, we obtain a locally-flat (or locally-Euclidean) Riemannian manifold. This means that every point has a neighborhood that is isometric to an open subset of the Euclidean plane. It we further remove the segment that connects $p_+$ and $p_-$ we obtain a Riemannian manifold with trivial holonomy: parallel transport is path-independent.  As a result, this manifold can be covered by a parallel frame field. 

A few comments:
(i) The distance between $p_+$ and $p_-$ after the cut-and-weld procedure is still $d$, and the shortest path between those points is the same as in the original plane, hence the segment between $p_+$ and $p_-$ is well-defined. We call this segment the \emph{dislocation line}. (ii) Parallel transport and holonomy are with respect to the Levi-Civita connection.
(iii) This construction was studied in detail by Guven et al. \cite{GHKM13}; see also Section 4.3 in \cite{KMS14}.

\subsection{The building-block $R(a,b,\theta,\e)$}
\label{subsec:R}

We next consider a compact subset of a plane with a single edge-dislocation. In Subsection~\ref{subsec:multiple} it is used as a building block for surfaces with multiple edge-dislocations. 

From the vertex $p_-$ of the anti-cone emanate two rays that are at an angle of $\pi/2+\theta$  from the dislocation line $[p_-,p_+]$. These two rays partition the surface into two sets. Since the total angle around the anti-cone is $2\pi+ 2\theta$, the set that does not include $p_+$ is a (non-singular) half-plane, which we denote by $X_-$. Likewise, we denote by $X_+$ the half-plane whose boundary intersects the dislocation line $[p_-,p_+]$ at $p_+$ at an angle $\pi/2-\theta$
(see Figure~\ref{fig:R}). 

\begin{figure}
\begin{center}
\begin{tikzpicture}
	\fill[opacity=0.1] (2,4)--(-1,4)--(-1,-1)--(2,-1)--cycle;
	\fill[opacity=0.1] (4,4)--(7,4)--(7,-1)--(4,-1)--cycle;
	\tkzDefPoint(0,3){A}
	\tkzDefPoint(0,0){B}
	\tkzDefPoint(6,0){C}
	\tkzDefPoint(6,3){D}
	\tkzDefPoint(4,1.5){pp}
	\tkzDefPoint(2,1.5){pm}
	\tkzDefPoint(4,-1){pp1}
	\tkzDefPoint(4,4){pp2}
	\tkzDefPoint(2,-1){pm1}
	\tkzDefPoint(2,4){pm2}
	\tkzDefPoint(0.7,3.8){lab1}
	\tkzDefPoint(5.3,3.8){lab2}
	\tkzDefPoint(2.52,0.75){lab3}
	\tkzDefPoint(3.5,0.75){lab4}
	\tkzDrawPoints(A,B,C,D,pp,pm);
	\tkzLabelPoint[left](A){$A$};
	\tkzLabelPoint[left](B){$B$};
	\tkzLabelPoint[right](C){$C$};
	\tkzLabelPoint[right](D){$D$};
	\tkzLabelPoint[right](pp){$p_+$};
	\tkzLabelPoint[left](pm){$p_-$};
	\tkzDrawSegment(A,B);
	\tkzDrawSegment(A,D);
	\tkzDrawSegment(C,B);
	\tkzDrawSegment(C,D);
	\tkzDrawSegment[dashed](pp,pm);
	\tkzLabelSegment(pp,pm){$d$};
	\tkzLabelSegment[left](A,B){$a$};
	\tkzLabelSegment[above=0.15cm](A,D){$b$};
	\tkzLabelSegment[below=0.15](B,C){$b$};
	\tkzLabelSegment[right](C,D){$a+\e$};
	\tkzDrawSegment[dashed](pp1,pp2);
	\tkzDrawSegment[dashed](pm1,pm2);
	\tkzMarkAngle[size=0.6](pm1,pm,pp);
	\tkzMarkAngle[size=0.6](pm,pp,pp1);
	\tkzText[rotate=-60](lab3){{\footnotesize $\pi/2+\theta$}};
	\tkzText[rotate=60](lab4){{\footnotesize $\pi/2-\theta$}};
	\tkzText(lab1){$X_-$}
	\tkzText(lab2){$X_+$}
	\draw[<->,dotted] (0,3.2) -- (6,3.2);
	\draw[<->,dotted] (0,-0.2) -- (6,-0.2);
\end{tikzpicture}
\end{center}
\caption{The building block $R(a,b,\theta,\e)$.}
\label{fig:R}
\end{figure}
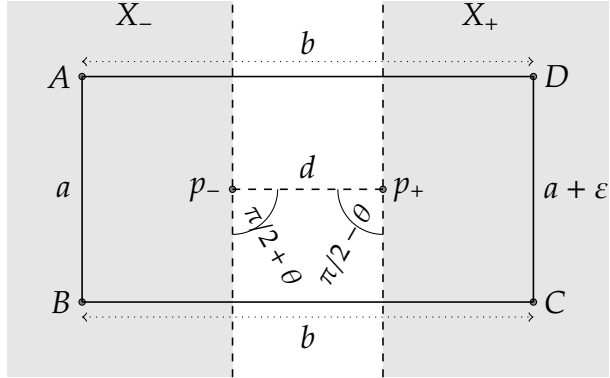

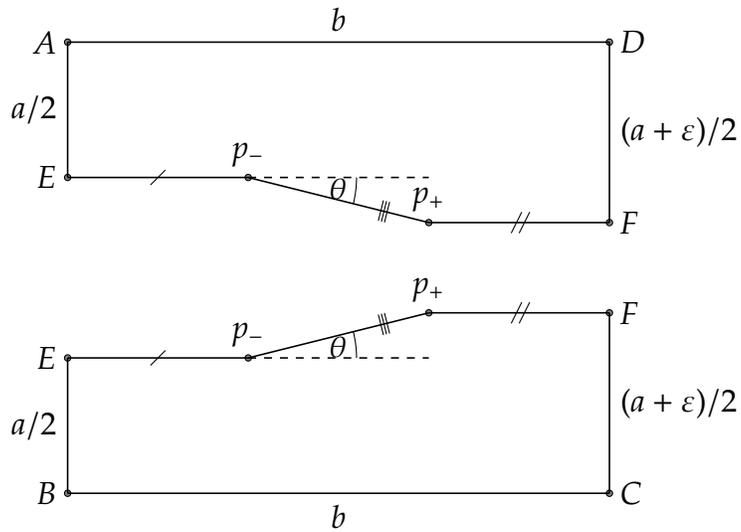
\begin{figure}
\begin{center}
\begin{tikzpicture}[scale=1.2]
	\tkzDefPoint(0,4){A}
	\tkzDefPoint(0,2.5){A1}
	\tkzDefPoint(2,2.5){pm}
	\tkzDefPoint(4,2.5){dummy}
	\tkzDefPoint(4,2){pp}
	\tkzDefPoint(6,2){D1}
	\tkzDefPoint(6,4){D}
	\tkzDrawPoints(A,A1,pm,pp,D1,D);
	\tkzLabelPoint[left](A){$A$};
	\tkzLabelPoint[left](A1){$E$};
	\tkzLabelPoint[right](D){$D$};
	\tkzLabelPoint[right](D1){$F$};
	\tkzLabelPoint[above](pm){$p_-$};
	\tkzLabelPoint[above](pp){$p_+$};
	\tkzDrawSegment(A,D);
	\tkzDrawSegment(D,D1);
	\tkzDrawSegment(A,A1);
	\tkzDrawSegment(A1,pm);
	\tkzDrawSegment(pp,pm);
	\tkzDrawSegment(pp,D1);
	\tkzDrawSegment[dashed](pm,dummy);
	\tkzLabelAngle(dummy,pm,pp){{\small $\theta$}};
	\tkzMarkAngle[size=1.2](pp,pm,dummy);
	\tkzLabelSegment(A,D){$b$};
	\tkzLabelSegment[left](A,A1){$a/2$};
	\tkzLabelSegment[right](D,D1){$(a+\e)/2$};
	\tkzMarkSegment[mark=s|](A1,pm)
	\tkzMarkSegment[pos=0.75, mark=|||](pm,pp)
	\tkzMarkSegment[mark=s||](pp,D1)

	\tkzDefPoint(0,-1){B}
	\tkzDefPoint(0,0.5){B1}
	\tkzDefPoint(2,0.5){pm}
	\tkzDefPoint(4,0.5){dummy}
	\tkzDefPoint(4,1){pp}
	\tkzDefPoint(6,1){C1}
	\tkzDefPoint(6,-1){C}
	\tkzDrawPoints(B,B1,pm,pp,C1,C);
	\tkzLabelPoint[left](B){$B$};
	\tkzLabelPoint[left](B1){$E$};
	\tkzLabelPoint[right](C){$C$};
	\tkzLabelPoint[right](C1){$F$};
	\tkzLabelPoint[above](pm){$p_-$};
	\tkzLabelPoint[above](pp){$p_+$};
	\tkzDrawSegment(B,C);
	\tkzDrawSegment(C,C1);
	\tkzDrawSegment(B,B1);
	\tkzDrawSegment(B1,pm);
	\tkzDrawSegment(pp,pm);
	\tkzDrawSegment(pp,C1);
	\tkzDrawSegment[dashed](pm,dummy);
	\tkzLabelAngle(dummy,pm,pp){{\small $\theta$}};
	\tkzMarkAngle[size=1.2](dummy,pm,pp);
	\tkzLabelSegment[below](B,C){$b$};
	\tkzLabelSegment[left](B,B1){$a/2$};
	\tkzLabelSegment[right](C,C1){$(a+\e)/2$};
	\tkzMarkSegment[mark=s|](B1,pm)
	\tkzMarkSegment[pos=0.75, mark=|||](pm,pp)
	\tkzMarkSegment[mark=s||](pp,C1)
\end{tikzpicture}
\end{center}
\caption{An alternative construction of the building block $R(a,b,\theta,\e)$.}
\label{fig:R2}
\end{figure}

We  construct a ``rectangle" $ABCD$ as follows:
\begin{enumerate}
\item Choose a point $A\in X_-$. 
\item Let $B\in X_-$ be the unique point such that $AB$ is parallel to the boundary of $X_-$ and $d(A,p_-)=d(B,p_-)$. Denote $|AB| = a$.
\item Choose $C\in X_+$ such that $BC\perp AB$. Denote $|BC|=b$. 
\item Let $D\in X_+$ be the unique point such that $AD\perp AB$ and $|AD| = b$.
\end{enumerate}

\paragraph{Comments}\ 
\begin{enumerate}
\item
An alternative representation of the same ``rectangle" is displayed in  Figure~\ref{fig:R2}.
The figure shows two hexagons, $ADFp_+p_-E$ and $BCFp_+p_-E$ (both are bone-fide Euclidean hexagons). The ``rectangle" is formed by identifying the 
segments $Ep_-$, $p_-p_+$ and $p_+F$ in both hexagons. 
\Cy{
This representation shows how the combination of two disclinations is metrically equivalent to standard description of a two-dimensional  edge-dislocation, which is the insertion of a half-line.
}

\item It follows (for example from the alternative representation in Figure~\ref{fig:R2}) that $BC\perp CD$ and $AD\perp CD$, hence $ABCD$ can be thought of as a rectangle with singularities. 
Note that Figure~\ref{fig:R} is somewhat misleading as this ``rectangle" cannot be embedded in the plane.

\item 
It is easy to see from Figure~\ref{fig:R2} that
\[
|CF| = |DF| = \frac{a+\e}{2},
\]
where
\beq
\e = 2d\sin\theta.
\label{eq:eps}
\eeq
Looking back at Figure~\ref{fig:R} we have $|CD| = a + \e$, i.e., the ``rectangle" $ABCD$ does not satisfy the Euclidean property of having opposite sides of equal length. The parameter $\e$ is the excess in length of the longer side, and measures the magnitude of the dislocation.

 \end{enumerate}

The above ``rectangle" is a simply-connected topological manifold with boundary, which we denote by $\tR(a,b,\theta,\e)$. Note that the parameters $a,b,\theta,\e$ do not  determine the shape uniquely, as the position of the dislocation line $[p_-,p_+]$ can be shifted horizontally. In reference to  Figure~\ref{fig:R2},
\[
|Ep_-| + d\,\cos\theta + |p_+F| = b.
\]
Without loss of generality we assume $|Ep_-| = |p_+F|$, thus determining $\tR(a,b,\theta,\e)$ uniquely (we will see later that the exact position of the dislocation line does not affect the limit). 
We also denote 
\[
R(a,b,\theta,\e) = \tR(a,b,\theta,\e)\setminus [p_-,p_+],
\]
which is a non-compact smooth manifold with corners. The  Levi-Civita parallel transport in $R(a,b,\theta,\e)$ is  path-independent, which is the reason we remove the whole dislocation line $[p_-,p_+]$ rather than only the singular points $p_{\pm}$. 

\subsection{Manifolds with multiple edge-dislocations}
\label{subsec:multiple}

By using $\tR(a,b,\theta,\e)$ as a building block and gluing copies together, we  generate manifolds with multiple edge-dislocations. Since our goal is to investigate a limit process in which dislocations get denser, we need to ``zoom out", or in other words, rescale the space in an appropriate way. We do so by
constructing manifolds that have a fixed boundary, a fixed total dislocation magnitude $\e$, and are partitioned into an increasing number of $\tR$-blocks.

Fix $a$, $b$, $\theta$ and  $\e$.
Given $n\in\bbN$, we construct a topological manifold with corners $\tM_n$ by gluing together $n^2$ building blocks, where the $(i,j)$-th block, which we denote by $\tM_n(i,j)$ is of  type 
\beq
\tM_n(i,j) = \tR\brk{a_{n,i},b_n,\theta,\e_n},
\label{eq:Rk}
\eeq
where
\beq
a_{n,i} = \frac{a+(i-1)\e/n}{n},
\qquad
b_n = \frac{b}{n}
\textand
\e_n = \frac{\e}{n^2}
\label{eq:abeps}
\eeq
(see Figure~\ref{fig:Mk}).

\begin{figure}
\begin{center}
\begin{tikzpicture}[scale=0.8]
	\tkzDefPoint(0,0){A}
	\tkzDefPoint(15,0){B}
	\tkzDefPoint(15,5){C}
	\tkzDefPoint(0,5){D}
	\tkzDrawPolygon(A,B,C,D)
	\tkzLabelSegment[below](A,B){$b$}
	\tkzLabelSegment[above](C,D){$b$}
	\tkzLabelSegment[left](A,D){$a$}
	\tkzLabelSegment[right](B,C){$a+\e$}
	\draw[dashed] (3,0) -- (3,5);
	\draw[dashed] (6,0) -- (6,5);
	\draw[dashed] (9,0) -- (9,5);
	\draw[dashed] (12,0) -- (12,5);
	\draw[dashed] (0,1.25) -- (15,1.25);
	\draw[dashed] (0,2.5) -- (15,2.5);
	\draw[dashed] (0,3.75) -- (15,3.75);
	\tkzText(1.5,0.5){{\footnotesize $\tM_n(1,1)$}}
	\tkzText(4.5,0.5){{\footnotesize $\tM_n(2,1)$}}
	\tkzText(7.5,0.5){{\footnotesize $\tM_n(3,1)$}}
	\tkzText(10.5,0.5){$\cdots$}
	\tkzText(13.5,0.5){{\footnotesize $\tM_n(n,1)$}}
	\tkzText(1.5,4.25){{\footnotesize $\tM_n(1,n)$}}
	\tkzText(4.5,4.25){{\footnotesize $\tM_n(2,n)$}}
	\tkzText(7.5,4.25){{\footnotesize $\tM_n(3,n)$}}
	\tkzText(10.5,4.25){$\cdots$}
	\tkzText(13.5,4.25){{\footnotesize $\tM_n(n,n)$}}
	\tkzText(1.5,1.75){$\vdots$}
	\tkzText(1.5,3){$\vdots$}
	\tkzText(13.5,1.75){$\vdots$}
	\tkzText(13.5,3){$\vdots$}
\end{tikzpicture}
\end{center}
\caption{The manifold $\tM_n$ obtained by gluing together $n^2$ building blocks. At the $i$-th column and $j$-th row we place the ``rectangle" $\tM_n(i,j)$ defined by \eqref{eq:Rk} and \eqref{eq:abeps}.}
\label{fig:Mk}
\end{figure}
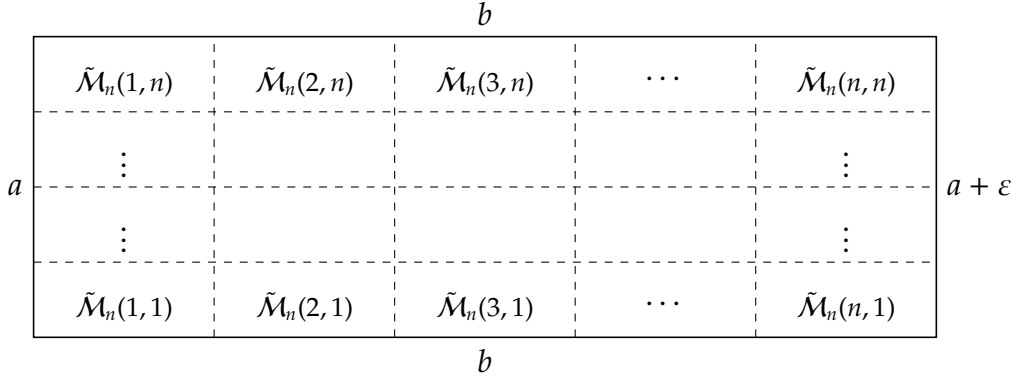

The rectangular nature of the  $\tM_n(i,j)$-blocks enables us to glue them such that the manifold is smooth 
across the  blocks. The only singularities in $\tM_n$ are the points $p_{\pm}$ in each $\tM_n(i,j)$.
The singularities do not get milder as $n$ increases, since $\theta$ remains fixed. The distance between pairs of singular points $p_\pm$ decreases, however, by \eqref{eq:eps}  as $1/n^2$. If we describe the defects as curvature multipoles, the monopoles remain constant but the dipoles decrease like $1/n^2$.

We  denote by $\M_n$ the amalgam of $n^2$ $R$-blocks.   
The manifolds $\M_n$ form a sequence of smooth manifolds with corners,  satisfying the following properties:

\begin{enumerate}
\item They are locally Euclidean; we denote the Riemannian metrics by $\g_n$.
\item The boundary is $n$-independent; $\partial\M_n$ consists of four segments of length $a,b,b$ and $a+\e$ that form a ``rectangle".
\item The  parallel transport operator $\Pi_n$, induced by the Levi-Civita connection $\nabla_n$ is path-independent since it is inherited from the parallel transport within the $R$-blocks.
\item The completion of $\M_n$ as a metric space is the simply-connected topological manifold with corners $\tM_n$. We denote by $d_n$ the distance function in $\tM_n$.
\end{enumerate}

\section{Convergence to a non-singular manifold with torsion}
\label{sec:conv_example}

In this section we show that the sequence $(\M_n,\g_n,\nabla_n)$ converges to a compact, non-singular,   simply-connected Riemannian manifold with corners $(\N,\g)$, endowed with a metrically-compatible \emph{non-symmetric} connection $\nabla$. 

We start by defining $(\N,\g,\nabla)$.
Denote by $\N=\N(a,b,\e)$ the sector of angle $\e/b$ of an annulus of inner radius $R_0 = ab/\e$ and outer radius $R_1= R_0+b =(a+\e)b/\e$. Endow $\N$ with the standard Euclidean metric, denoted by $\g$; the corresponding distance function is denoted by $d$. $(\N,\g)$ is a manifold with corners whose edges have lengths $a$, $b$, $b$ and $a+\e$
(see Figure~\ref{fig:M}).

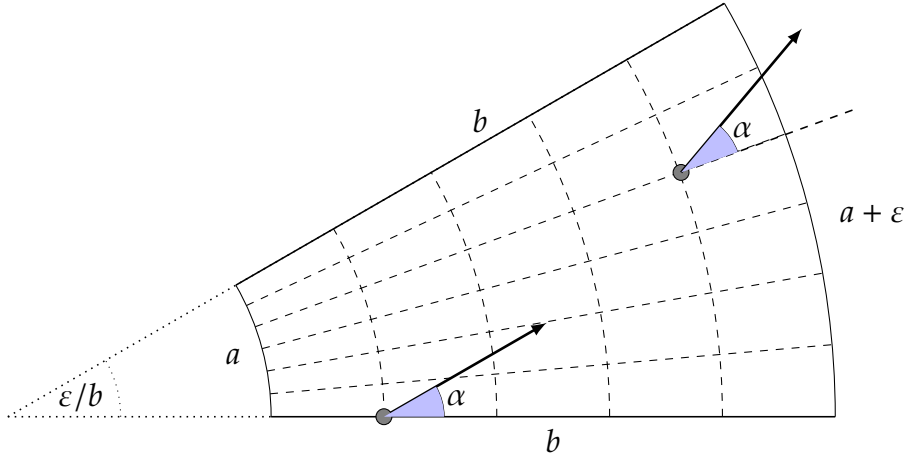
\begin{figure}
\begin{center}
\begin{tikzpicture}
	\tkzDefPoint(0,0){O}
	\tkzDefPoint(11,0){A}
	\tkzDefPoint(3.03108891324553526366,1.75){B}
	\tkzDefPoint(3.5,0){A1}
	\tkzDefPoint(9.52627944162882511436,5.5){B1}
	\tkzDrawSegment(A1,A);
	\tkzDrawSegment(B1,B);
	\tkzDrawSegment[dotted](O,A1);
	\tkzDrawSegment[dotted](O,B1);
	\tkzLabelAngle(A1,O,B1){$\e/b$}
	\tkzMarkAngle[size=1.5,dotted](A1,O,B1)
	\draw (11,0) arc (0:30:11);
	\draw (3.5,0) arc (0:30:3.5);
	\draw[dashed] (5.,0) arc (0:30:5.);
	\draw[dashed] (6.5,0) arc (0:30:6.5);
	\draw[dashed] (8,0) arc (0:30:8);
	\draw[dashed] (9.5,0) arc (0:30:9.5);
	\draw[dashed] (3.5*0.996194698,3.5*0.08715) -- (11*0.996194698,11*0.08715);
	\draw[dashed] (3.5*0.984807753,3.5*0.173648) -- (11*0.984807753,11*0.173648);
	\draw[dashed] (3.5*0.96592582,3.5*0.258819045) -- (11*0.96592582,11*0.258819045);
	\draw[dashed] (3.5*0.93969262,3.5*0.3420201433) -- (11*0.93969262,11*0.3420201433);
	\draw[dashed] (3.5*0.90630778,3.5*0.4226182617) -- (11*0.90630778,11*0.4226182617);

	\tkzDefPoint(5,0){P1}
	\tkzDefPoint(7.5,0){P2}
	\tkzDefPointBy[rotation=center P1 angle 30](P2)
	\tkzGetPoint{P3}
	\tkzDrawVector[line width=1pt](P1,P3)
	\tkzDrawPoint[size=6pt](P1)
	\tkzMarkAngle[fill=blue!25,mkpos=.2, size=0.8](P2,P1,P3)
	\tkzLabelAngle[pos=1](P2,P1,P3){$\alpha$}
	
	\tkzDefPoint(8.95,3.25){P1}
	\tkzDefPoint(11.45,3.25){P2}
	\tkzDefPointBy[rotation=center P1 angle 20](P2)
	\tkzGetPoint{P2}
	\tkzDrawSegment[dashed](P1,P2)
	\tkzDefPointBy[rotation=center P1 angle 30](P2)
	\tkzGetPoint{P3}
	\tkzDrawPoint[size=6pt](P1)
	\tkzDrawVector[line width=1pt](P1,P3)
	\tkzMarkAngle[fill=blue!25,mkpos=.2, size=0.8](P2,P1,P3)
	\tkzLabelAngle[pos=1](P2,P1,P3){$\alpha$}

	\tkzLabelSegment[below](A1,A){$b$};
	\tkzLabelSegment[above](B1,B){$b$};
	\tkzText(3,0.8){$a$};
	\tkzText(11.5,2.7){$a+\e$};
\end{tikzpicture}
\end{center}
\caption{The limit manifold $\N$, its partition into sub-sectors $\N_n(i,j)$ and its path-independent parallel transport.}
\label{fig:M}
\end{figure}

We endow $\N$ with a polar system of coordinates $(r,\vp)$, 
\[
(r,\vp) \in [R_0,R_1]\times[0,\e/b].
\]
In these coordinates the Euclidean metric takes the form
\[
\g = dr\otimes dr + r^2\,d\vp\otimes d\vp.
\]
We further endow $T\N$ with a connection $\nabla$, defined by declaring the frame field 
$E = (\pl_r, r^{-1}\,\pl_\vp)$ parallel. We denote by $\Pi$ the (path-independent) parallel transport operator of $\nabla$,
\[
\Pi_p^q:T_p\N\to T_q\N.
\]  
Since $E$ is orthonormal, it follows that $\nabla$ is metrically-compatible \Cy{(see e.g. \cite{DoC92} p.53)}. Note however that $E$ is not parallel with respect to the Levi-Civita connection, hence $\nabla$ is not the Levi-Civita connection, i.e., it is non-symmetric and carries torsion.  
A direct calculation shows that this torsion equals
\[
T = \frac{1}{r}\, dr\wedge d\vp\otimes \partial_\vp.
\]

Note that constant-$r$ and constant-$\vp$ parametric curves are, by definition, $\nabla$-geodesics, but only constant-$r$ curves are locally length-minimizing. 
See Figure~\ref{fig:M} for an illustration of how vectors are parallel transported under $\Pi$.
Note that $\nabla$ admits, by definition, a global parallel frame field, hence its curvature tensor is zero. Since it is also metrically-consistent and non-symmetric, the triplet $(\N,\g,\nabla)$ is a Weitzenb\"ock manifold.

Our main result can be stated as follows:
\begin{quote} 
\emph{
The sequence of locally-Euclidean smooth manifolds with connections $(\M_n,\g_n,\nabla_n)$ converges to the Euclidean sector with non-symmetric connection $(\N,\g,\nabla)$.
}
\end{quote}
The  mode of convergence will be described below.
In Subsection~\ref{subsec:GH} we prove the convergence of $(\tM_n,\g_n)$ to $(\N,\g)$ in the Gromov-Hausdorff (GH) sense. In Subsection~\ref{subsec:Conn} we construct homeomorphisms
\[
F_n : \N \to \tM_n, 
\]
that (i) realize the GH convergence, i.e., have asymptotically vanishing distortions, and (ii) have the property that pullbacks of parallel frame fields of $(\M_n,\nabla_n)$ converge to a parallel frame field of $(\N,\nabla)$.
We then prove some properties of these homeomorphisms, which guide us in the definition of a general notion of convergence described in Section~\ref{sec:conv}.

\subsection{Gromov-Hausdorff convergence}
\label{subsec:GH}

The GH distance is a measure of \Cy{distance} between metric spaces, and is a metric on isometry classes of compact metric spaces (\cite{Pet06}, Chapter 10). A sufficient and necessary condition for a sequence of metric spaces $(Z_n,d_n)$ to converge in the GH sense to a metric space $(Z,d)$ is that there exist bijections
\[
T_n: A_n\to B_n,
\]
where $A_n\subset Z$ and $B_n\subset Z_n$ are finite $\delta_n$-nets, $\delta_n\to0$, and the distortion of $T_n$,
\[
\dis T_n = \max_{x,y\in A_n} |d(x,y) - d_n( T_n(x), T_n(y))|,
\]
tends to zero.

\begin{theorem}
\label{th:GH}
Let $(\tM_n,\g_n)$ be the sequence of compact metric spaces defined in Section~\ref{sec:sequence}, and let $(\N,\g)$ be the Euclidean sector defined above. Then,
$(\tM_n,\g_n)$ GH converges to $(\N,\g)$.
\end{theorem}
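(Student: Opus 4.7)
The plan is to verify the net-based characterization of Gromov--Hausdorff convergence by constructing parallel partitions of $\N$ and $\tM_n$ into small cells, using their corners as natural finite nets, and bounding the distortion of the corner-to-corner bijection. First, I partition $\N$ into $n^2$ annular sub-sectors $\N_n(i,j)$ via the polar windows $(r,\vp)\in[R_0+(i-1)b/n,\,R_0+ib/n]\times[(j-1)\e/(nb),\,j\e/(nb)]$. Using $R_0=ab/\e$ and \eqref{eq:abeps}, a direct computation gives that $\N_n(i,j)$ has inner arc length exactly $a_{n,i}$, outer arc length $a_{n,i}+\e_n$, and radial sides $b_n$---matching precisely the side lengths of $\tM_n(i,j)$. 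I then take as finite nets the $(n+1)^2$ grids $\{x_{i,j}\}\subset\N$ and $\{y_{i,j}\}\subset\tM_n$ of cell corners. Since each sub-sector and each block has diameter $O(1/n)$, both are $\delta_n$-nets with $\delta_n=O(1/n)\to 0$, and with $T_n(x_{i,j}):=y_{i,j}$ the task reduces to showing
\begin{equation*}
\max_{(i,j),(i',j')}\bigl|d_\N(x_{i,j},x_{i',j'})-d_{\tM_n}(y_{i,j},y_{i',j'})\bigr|\longrightarrow 0.
\end{equation*}

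For $d_\N$ I have the closed-form expression $\sqrt{r_i^2+r_{i'}^2-2r_ir_{i'}\cos(\vp_{j'}-\vp_j)}$, valid whenever the straight Euclidean segment between the two points stays in $\N$; the remaining cases can be handled by splitting $\N$ into convex sub-regions along well-chosen radial arcs. For $d_{\tM_n}$ I would produce an explicit piecewise-geodesic ``shadow path'' that mimics the $\N$-chord: within each block traversed, the path is a Euclidean straight segment (unambiguous since each block is locally flat away from its dislocation line), and consecutive segments are concatenated across block boundaries via the parallel transport operator $\Pi_n$. Summing the block contributions yields an upper bound on $d_{\tM_n}$ that matches $d_\N$ up to $o(1)$. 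A matching lower bound would come from locally developing any candidate short path into $\bbR^2$ and applying a triangle-inequality argument to rule out significant shortcuts.

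The central obstacle is the inter-block bookkeeping in the upper-bound argument. Although $\tM_n$ has trivial Levi-Civita holonomy, so vectors transport consistently across blocks, every circuit enclosing a dislocation dipole picks up a nonzero Burgers vector of magnitude $O(\e_n)=O(1/n^2)$; summed over the $O(n^2)$ dipoles enclosed by a typical loop, it is precisely this cumulative Burgers-vector contribution that bends the intrinsic metric of $\tM_n$ into the annular shape of $\N$. Quantifying this accumulation uniformly over all pairs of corners---so that the shadow-path length agrees with the $\N$-chord up to $o(1)$ and no substantially shorter alternative path exists---is the delicate step. Once it is established, $\dis T_n\to 0$, proving the claimed convergence.
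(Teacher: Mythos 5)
Your setup — the $n^2$-cell partitions of $\N$ and $\tM_n$ with matching side lengths, the corner grids as $O(1/n)$-nets, and the reduction to $\dis T_n\to 0$ — coincides with the paper's. But the proposal stops exactly where the proof has to begin: you name the uniform control of the accumulated distortion as ``the delicate step'' and do not supply it. The paper closes this gap with two specific lemmas, neither of which appears in your plan. First (Lemma~\ref{lm:2}), the natural boundary map $\partial\N_n(i,j)\to\partial\tM_n(i,j)$ on a \emph{single} cell has distortion $O(n^{-2})$; this is proved by exhibiting explicit maps from both the annular sub-sector and the two-hexagon block to a common Euclidean $a_{n,i}\times b_n$ rectangle, each moving points by $O(n^{-2})$. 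Second (Lemma~\ref{lm:3}), a minimizing path between net points crosses at most $3n$ of the $n^2$ cells, proved by a monotonicity argument on the cell indices along a geodesic. The product $3n\cdot O(n^{-2})=O(n^{-1})$ is what makes the shadow-path comparison work. Without a bound of the second kind, your per-cell errors could a priori accumulate over $\Theta(n^2)$ cells and give only $O(1)$.

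There is also a substantive problem with the bookkeeping you do propose. You suggest controlling the discrepancy by summing Burgers vectors of enclosed dipoles, and you correctly observe that a typical loop encloses $O(n^2)$ dipoles each of magnitude $O(\e_n)=O(n^{-2})$ — but that sum is $O(1)$, not $o(1)$. As you yourself note, this finite total is precisely what bends the limit into the annular sector $\N$; it is therefore already absorbed into the target metric and cannot serve as the error term. The quantity that must be shown to vanish is the \emph{residual} mismatch after the bending is accounted for, and that requires a cell-by-cell comparison against $\N_n(i,j)$ (not against a flat development), together with the geodesic cell-count bound. Your lower-bound sketch (``develop any candidate short path into $\bbR^2$'') faces the same issue: the development of a path in $\tM_n$ is only well defined up to the monodromy of the enclosed dislocations, which is $O(1)$ in aggregate. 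As written, the argument does not establish $\dis T_n\to 0$.
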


\begin{proof}
Denote by $X_n$ the union of boundaries of the $n^2$ blocks forming $\tM_n$ ($X_n$ is the union of both dashed and solid lines in Figure~\ref{fig:Mk}). 
The vertices of $X_n$ form a finite $O(n^{-1})$-net of $\tM_n$. 

Given $n$, we partition $\N$ into $n^2$ sectors, where the $(i,j)$-th sector, denoted by $\N_n(i,j)$ is of type
\[
\N_n(i,j) = \N\brk{a_{n,i}, b_n, \e_n},
\]
where $a_{n,i},b_n,\e_n$ are defined in \eqref{eq:abeps}.
In polar coordinates, 
\[
\N_n(i,j) =
[r_i,r_{i+1}]\times[\vp_j, \vp_{j+1}],
\]
where
\[
r_i=(i-1)\,\Delta R_n,\qquad\qquad
 \vp_j = (j-1)\, \Delta \vp_n.
\]
and
\[
\Delta R_n = \frac{R_1-R_0}{n} = b_n,
\qquad\qquad
\Delta\vp_n = \e_n/b_n.
\]
In correspondence with $X_n$, we denote by $Y_n$ the union of the boundaries of $\N_n(i,j)$
($Y_n$ is the union of both dashed and solid lines in Figure~\ref{fig:M}).

These partitions of $\N$ and $\tM_n$ have the following properties:
\begin{enumerate}
\item The vertices of $Y_n$ form a finite $O(n^{-1})$-net  of $\N$ and have the same cardinality as the vertices of $X_n$.
\item The boundary of $\N_n(i,j)$ consists of curves that are of the same length as the boundary of $\M_n(i,j)$.
\item $Y_n$ consists of $\nabla$-geodesics and $X_n$ consists of $\nabla_n$-geodesics.  

\end{enumerate}

It follows that there exists a natural mapping $T_n: Y_n\to X_n$ that preserves the intrinsic distance of $Y_n$ and $X_n$ (the intrinsic distances on path-connected subsets differ from the induced distances $d$ and $d_n$).
In particular, $T_n$ restricted to the vertices of $Y_n$ is a bijection between two finite $O(n^{-1})$-nets of $\M_n$ and $\N$ respectively.  To prove that $(\tM_n,\g_n)$ converges to $(\N,\g)$ in the GH sense it only remains to show that
\[
\dis T_n \to 0,
\]
where the distortion is with respect to the induced distances $d$ and $d_n$.

The proof relies on two lemmas, whose proofs are given in the appendix.
The first lemma shows that the restrictions of $T_n$ to the boundaries $\partial\N_n(i,j)$ of single cells,
has a distortion of order $O(n^{-2})$:

\begin{lemma}
\label{lm:2}
Let $a,b,\e>0$ and $\theta\in(0,\pi/2)$ be given. 
Let $T_{n,i,j}$ be the natural intrinsic distance preserving mapping,
\[
T_{n,i,j} : \partial \N_n(i,j) \to \partial \tM_n(i,j).
\] 
Then, there exists a constant $c>0$ independent of $n,i,j$, 
such that
\[
\max_{p,q\in \partial\N_n(i,j)} \left|d(p,q) - d_n(T_{n,i,j}(p),T_{n,i,j}(q))\right| < \frac{c}{n^2}.
\]
\end{lemma}

In other words, since $\e_n$ tends to zero faster than $a_{n,i},b_n$, both $\M_n(i,j)$ and $\N_n(i,j)$ become metrically similar to a Euclidean rectangle of size $a_{n,i}\times b_n$, and in particular to each other. Lemma~\ref{lm:2} quantifies this assertion.

The second lemma bounds the number of cells intersected by a length minimizing curve, thus allowing to estimate the accumulated distortion along such a curve:

\begin{lemma}
\label{lm:3}
For every  $n\in\bbN$ and $p,q\in Y_n$, the shortest path in $\N$ connecting $p$ and $q$ intersects at most $3n$ out of the $n^2$ sectors $\N_n(i,j)$. 
Likewise, for every  $n\in\bbN$ and $p,q\in X_n$, the shortest path in $\tM_n$ (viewed as a metric space) connecting $p$ and $q$ intersects at most $3n$ out of the $n^2$ ``rectangles" $\tM_n(i,j)$. 
\end{lemma}


To complete the proof of the theorem,
let $p,q\in Y_n$, and let $\gamma:[0,1]\to \N$ be the shortest path in $\N$ connecting $p$ and $q$, i.e.,
\[
\text{Length}(\gamma) = d(p,q).
\] 
Denote by $p=p_0,p_1,\ldots,p_m=q$ the entrance/exit points of $\gamma$ into sectors $\N_n(i,j)$  in $Y_n$, that is, $p_j=\gamma(t_j)$ where $(t_0,t_1,\dots,t_m)$ is the coarsest partition of $[0,1]$ for which $\gamma$ maps each interval into a single sector.
By Lemma \ref{lm:3}, $m\le 3n$, whereas by Lemma \ref{lm:2} 
\[
|d(p_j,p_{j+1})-d_n(T_n(p_j) , T_n(p_{j+1}))| <  c\,n^{-2}.
\]
Hence, there exists a curve $\sigma:[0,1]\to \tM_n$ such that $\sigma(t_j) = T_n(p_j)$ and
\[
\text{Length}(\sigma) < \text{Length}(\gamma) + 3n \cdot c\,n^{-2}
\]
(see Figure~\ref{fig:gamma}).
It follows that
\[
d_n(T_n(p), T_n(q)) < d(p,q) + O(n^{-1}).
\]
A similar argument, starting from a curve connecting $T_n(p)$ to $T_n(q)$ shows that
\[
d(p,q) < d_n(T_n(p), T_n(q)) + O(n^{-1}),
\]
hence
\[
\dis T_n = O(n^{-1}),
\]
which completes the proof.

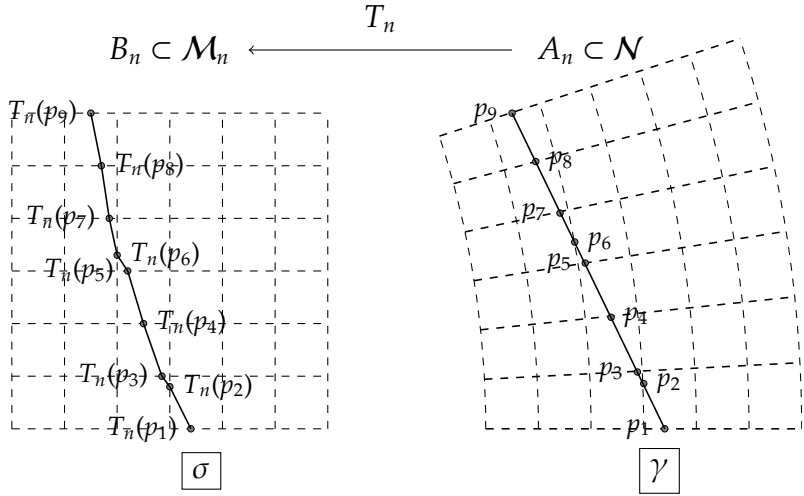
\begin{figure}
\begin{center}
\begin{tikzpicture}[scale=0.7]
	\tkzInit[xmin=-1, xmax=15, ymin=-1.3, ymax=8.5]
	\tkzClip
	\foreach \i in {1,...,7}
	{
		\draw[dashed] (0,\i-1) -- (6,\i-1);
		\draw[dashed] (\i-1,0) -- (\i-1,6);
	}

	\tkzDefPoint(-9,0){O}
	\tkzDefPoint(9,0){P1}
	\tkzDefPointBy[rotation=center O angle 3](P1);	\tkzGetPoint{P2}
	\tkzDefPointBy[rotation=center O angle 3](P2);	\tkzGetPoint{P3}
	\tkzDefPointBy[rotation=center O angle 3](P3);	\tkzGetPoint{P4}
	\tkzDefPointBy[rotation=center O angle 3](P4);	\tkzGetPoint{P5}
	\tkzDefPointBy[rotation=center O angle 3](P5);	\tkzGetPoint{P6}
	\tkzDefPointBy[rotation=center O angle 3](P6);	\tkzGetPoint{P7}

	\tkzDefPoint(15,0){Q1}
	\tkzDefPointBy[rotation=center O angle 3](Q1);	\tkzGetPoint{Q2}
	\tkzDefPointBy[rotation=center O angle 3](Q2);	\tkzGetPoint{Q3}
	\tkzDefPointBy[rotation=center O angle 3](Q3);	\tkzGetPoint{Q4}
	\tkzDefPointBy[rotation=center O angle 3](Q4);	\tkzGetPoint{Q5}
	\tkzDefPointBy[rotation=center O angle 3](Q5);	\tkzGetPoint{Q6}
	\tkzDefPointBy[rotation=center O angle 3](Q6);	\tkzGetPoint{Q7}

	\foreach \i in {1,...,7}
	{
		\tkzDrawSegment[dashed](P\i,Q\i);
		\draw[dashed] (8+\i,0) arc (0:18:17+\i);
	}

	\node at (3,7.2) {$B_n\subset\M_n$};
	\node at (11,7.2) {$A_n\subset\N$};
	\draw[->] (9.5,7.2) -- (4.5,7.2);
	\node at (7,7.8) {$T_n$};
	\node[draw] at (12.3,-0.8) {$\gamma$};
	\node[draw] at (3.6,-0.8) {$\sigma$};

	\tkzDefPoint(12.4,0){P1}
	\tkzDefPoint(9.5,6){P9}
	\tkzDefPoint(12,0.86){P2}
	\tkzDefPoint(11.88,1.08){P3}
	\tkzDefPoint(11.38,2.12){P4}
	\tkzDefPoint(10.89,3.15){P5}
	\tkzDefPoint(10.69,3.55){P6}
	\tkzDefPoint(10.41,4.1){P7}
	\tkzDefPoint(9.95,5.08){P8}
	\tkzDrawPoints(P1,P2,P3,P4,P5,P6,P7,P8,P9)
	\tkzDrawSegment(P1,P9)
	\foreach \i in {1,3,5,7,9}
	{
		\tkzLabelPoint[left=1pt](P\i){{\footnotesize $p_\i$}}
	}
	\foreach \i in {2,4,6,8}
	{
		\tkzLabelPoint[right=1pt](P\i){{\footnotesize $p_\i$}}
	}

	\tkzDefPoint(3.4,0){Q1}
	\tkzDefPoint(1.5,6){Q9}
	\tkzDefPoint(3,0.8){Q2}
	\tkzDefPoint(2.85,1){Q3}
	\tkzDefPoint(2.5,2){Q4}
	\tkzDefPoint(2.2,3){Q5}
	\tkzDefPoint(2,3.3){Q6}
	\tkzDefPoint(1.85,4){Q7}
	\tkzDefPoint(1.7,5.){Q8}
	\tkzDrawPoints(Q1,Q2,Q3,Q4,Q5,Q6,Q7,Q8,Q9)
	\tkzDrawSegment(Q1,Q2)
	\tkzDrawSegment(Q2,Q3)
	\tkzDrawSegment(Q3,Q4)
	\tkzDrawSegment(Q4,Q5)
	\tkzDrawSegment(Q5,Q6)
	\tkzDrawSegment(Q6,Q7)
	\tkzDrawSegment(Q7,Q8)
	\tkzDrawSegment(Q8,Q9)
	\foreach \i in {1,3,5,7,9}
	{
		\tkzLabelPoint[left=1pt](Q\i){{\footnotesize $T_n(p_\i)$}}
	}
	\foreach \i in {2,4,6,8}
	{
		\tkzLabelPoint[right=1pt](Q\i){{\footnotesize $T_n(p_\i)$}}
	}

\end{tikzpicture}
\end{center}
\caption{The curves $\gamma$ and $\sigma$ used in the proof of Theorem~\ref{th:GH}.}
\label{fig:gamma}
\end{figure}

\end{proof}
\bigskip

By the nature of the GH distance, the limit of $(\tM_n,\g_n)$ is unique up to isometry of metric spaces. That is, if $(\tM_n,\g_n)$ GH converges also to $(\N',\g')$, then there exists a distance-preserving bijection $(\N,d) \to (\N',d')$. By the Myers-Steenrod theorem (\cite{Pet06}, p.147), this map is smooth and is a Riemannian isometry $(\N,\g)\to(\N',\g')$. 

\subsection{Convergence of the parallel transport}
\label{subsec:Conn}

GH convergence of metric spaces is a very weak notion of convergence. To wit, the sequence of finite metric spaces consisting of the vertices of $X_n$ alone with the induced metric $d_n$ GH-converges to the smooth Riemannian manifold $(\N,\g)$. 
On the other hand, stronger notions of convergence of smooth manifolds, such as H\"older convergence, require $\M_n$ to be diffeomorphic to $\N$, which is not the case. Thus, we look for a new notion of convergence, which is strong enough to capture the smooth structure of $\M_n$ and its parallel transport, while being weak enough to allow for topological defects.

Since the manifolds $\tM_n$ and $\N$ are homeomorphic, it is natural to relate their structures by constructing a sequence of homeomorphisms 
\[
F_n:\N\to \tM_n,
\]
which are smooth on the pre-image of $\M_n$. Moreover, by defining the $F_n$ to be extensions of the $T_n$ defined in the previous section, we guarantee the preservation of both length and  geodesic properties along the $\nabla$-geodesic grids $Y_n$.   At this point the limiting connection $\nabla$ may look arbitrary. In Section~\ref{sec:conv} we will see that it is determined uniquely. 

We define the mappings $F_n$ thought their restrictions to sub-sectors,
\[
F_n: \N_n(i,j) \to \tM_n(i,j).
\]
Recall that the parametrization of $\N_n(i,j)$ by polar coordinates is
\[
\N_n(i,j) = [r_i,r_{i+1}]\times[\vp_j, \vp_{j+1}],
\]
where $r_i = (i-1)\Delta R_n$ and $\vp_j = (j-1)\Delta\vp_n$. 
Recall, furthermore, that $\tM_n(i,j)$ can be represented as two hexagons glued together (see Figure~\ref{fig:R2}). 
$F_n$ maps the lower half, $\vp_j \le \vp \le \vp_{j +1/2}$, of $\N_n(i,j)$ onto the lower hexagon of $\tM_n(i,j)$. Endowing this lower hexagon with canonical Euclidean coordinates, $F_n(r,\vp) = (X(r,\vp),Y(r,\vp))$ is defined by  
\[
X(r,\vp) = r - r_i,
\]
and
\[
Y(r,\vp) = (\vp - \vp_j) \times
\begin{cases}
r_i & 		r\in \Brk{r_i, r_{i+1/2-D/2}}, \\
r_i + \frac{1}{D}\brk{r - r_{i+1/2-D/2}} & 
r\in \Brk{r_{i+1/2-D/2},r_{i+1/2+D/2}},\\
r_{i+1} &	r\in \Brk{r_{i+1/2+D/2},r_{i+1}},
\end{cases}
\]
where $D= \frac{\Delta \vp_n}{2 \tan \theta}=O(n^{-1})$.
The mapping of the upper half, $\vp_{j+1/2} \le \vp \le \vp_{j+1} $, of $\N_n(i,j)$ onto the upper hexagon of $\tM_n(i,j)$ is defined  similarly.
One can verify that $F_n$ is indeed a homeomorphism that extends the mapping $T_n$. 

$F_n$ is a local diffeomorphism everywhere in $F_n^{-1}(\M_n)\subset \N$, except along the lines $r = r_{i+1/2\pm D/2}$ in every sector. We now calculate the derivatives of $F_n$ and the pullback metric on $\N$.
Differentiating $F_n$, 
\[
\frac{\pl X}{\pl r} = 1, \qquad \frac{\pl X}{\pl \vp} = 0,
\]
\[
\frac{\pl Y}{\pl r} = 
\begin{cases}
0 & 							
r\in \brk{r_i, r_{i+1/2-D/2}} \\
\frac{\vp - \vp_j}{D} & 
r\in \brk{r_{i+1/2-D/2} ,r_{i+1/2+D/2}}\\
0 &							
r\in \brk{r_{i+1/2+D/2},r_{i+1}},
\end{cases}
\]
and
\begin{align}
\label{eq:dF_n}
\frac{\pl Y}{\pl \vp} = 
\begin{cases}
r_i & 						
r\in \brk{r_i, r_{i+1/2-D/2}} \\
r_i + \frac{1}{D}\brk{r - r_{i+1/2-D/2}} & 
r\in \brk{r_{i+1/2-D/2},r_{i+1/2+D/2}}\\
r_{i+1} &					
r\in \brk{r_{i+1/2+D/2},r_{i+1}}.
\end{cases}
\end{align}
This mapping can be slightly modified to be $C^1$ (and even smooth) in $F_n^{-1}({\M_n})$, resulting in a diffeomorphism $F_n^{-1}({\M_n})\to \M_n$.

We now prove several properties of the mappings $F_n$ that will be relevant for the notion of convergence defined in Section~\ref{sec:conv}. Proposition~\ref{pn:vanishing dis} deals with the vanishing distortion of $F_n$. Proposition~\ref{pn:connection convergence} deals with the convergence of the pullback connections.

\begin{proposition}\ 
\label{pn:vanishing dis}
\begin{enumerate}
\item $\dis F_n \to 0$.
\item For every $p\in[1,\infty)$,
\[
\int_{\N} \dist{^p}(dF_n,\SO{\g,\g_n}) \,\Volume \to 0,
\]
where $\SO{\g,\g_n}$ denotes the set of metric- and orientation-preserving linear maps $T\N\to F_n^*T\M_n$.
\end{enumerate}
\end{proposition}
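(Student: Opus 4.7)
The proposition has two parts; both rest on an explicit sub-sector-by-sub-sector analysis, exploiting the fact that each $\N_n(i,j)$ has diameter $O(n^{-1})$.

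For Part 1, I plan to reduce to the distortion estimate on $T_n$ from Theorem~\ref{th:GH}. Given arbitrary $p,q\in\N$, I would choose vertices $p',q'$ of $Y_n$ with $d(p,p'), d(q,q')=O(n^{-1})$. The explicit formula for $dF_n$ shows it is uniformly bounded, so $F_n$ has an $n$-independent Lipschitz constant on each sub-sector, giving $d_n(F_n(p),F_n(p'))$ and $d_n(F_n(q),F_n(q'))$ of order $O(n^{-1})$. Since $F_n=T_n$ on $Y_n$, Theorem~\ref{th:GH} supplies $|d(p',q')-d_n(F_n(p'),F_n(q'))|\le \dis T_n=O(n^{-1})$, and the triangle inequality then yields $\dis F_n=O(n^{-1})\to 0$.

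For Part 2, I plan to prove the pointwise estimate $\dist(dF_n,\SO{\g,\g_n})=O(n^{-1})$ on all of $\N$ (outside the measure-zero junction lines $r=r_{i+1/2\pm D/2}$) and then integrate. Working in the orthonormal frames $(\partial_r,r^{-1}\partial_\vp)$ on $(\N,\g)$ and the Cartesian frame $(\partial_X,\partial_Y)$ on each hexagon of $\tM_n$, the matrix of $dF_n$ on the left and right thirds of a sub-sector is diagonal with entries $1$ and $r_*/r$ for $r_*\in\{r_i,r_{i+1}\}$; since $|r-r_*|\le b_n=O(n^{-1})$ and $r\ge R_0$, one has $r_*/r=1+O(n^{-1})$, so $\dist(dF_n,\SO{\g,\g_n})=O(n^{-1})$ there. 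On the central strip, continuity of $F_n$ forces $\partial Y/\partial\vp$ to interpolate linearly from $r_i$ to $r_{i+1}$ across radial width $D$, yielding a lower-triangular matrix in ON frames whose diagonal entries equal $1+O(n^{-1})$ and whose off-diagonal (shear) entry is bounded by $(\Delta\vp_n/2)(r_{i+1}-r_i)/D = b_n\tan\theta = O(n^{-1})$, using that $b_n$, $\Delta\vp_n$ and $D$ are all $O(n^{-1})$. Consequently $\dist(dF_n,\SO{\g,\g_n})=O(n^{-1})$ almost everywhere, and since $\N$ has finite $\g$-volume, $\int_\N \dist^p(dF_n,\SO{\g,\g_n})\,\Volume = O(n^{-p})\to 0$.

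The main obstacle is the central interpolation strip, whose total $\g$-volume summed across all $n^2$ sub-sectors is of order $O(1)$. A careful derivative computation is needed to verify that, despite this non-vanishing measure, the shear on each strip is of order $n^{-1}$ rather than order $1$, so that its $L^p$ contribution vanishes. This is where the fact that the radial cell width $b_n$, the angular increment $\Delta\vp_n$ and the interpolation width $D$ are all of the same order $O(n^{-1})$ plays a crucial role: the factor $(r_{i+1}-r_i)/D = b_n/D$ appearing in $\partial Y/\partial r$ remains $O(1)$, but multiplication by $(\vp-\vp_j)\le\Delta\vp_n/2$ supplies the extra factor $O(n^{-1})$ that makes the $L^p$ integral shrink.
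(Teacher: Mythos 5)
Your Part~1 is correct and is essentially the paper's own argument spelled out: $F_n$ extends $T_n$, the vertices of $Y_n$ form an $O(n^{-1})$-net, and the uniform bound on $dF_n$ gives the Lipschitz control needed to pass from net points to arbitrary points.

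Part~2, however, contains a genuine error in the treatment of the central interpolation strips. You claim that there the shear entry of $dF_n$ is $b_n\tan\theta=O(n^{-1})$, so that $\dist(dF_n,\SO{\g,\g_n})=O(n^{-1})$ holds almost everywhere. This cannot be right, for a geometric reason that is independent of how one reads the (admittedly typo-afflicted) formula for $D$: $F_n$ must carry the arc $\vp=\vp_{j+1/2}$ onto the upper boundary of the lower hexagon of $\tM_n(i,j)$, and that boundary contains the gluing segment $p_-p_+$, whose slope is $\tan\theta$ with $\theta$ \emph{fixed} in $n$. Hence on the preimage of that segment $\pl Y/\pl r$ is of order $\tan\theta=O(1)$, and $\dist(dF_n,\SO{\g,\g_n})$ is bounded \emph{below} by a constant on a subset of positive relative measure of each central strip. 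What actually tends to zero is not the integrand there but the measure of the strips: the slanted segment has horizontal extent $d_n\cos\theta=\e_n/(2\tan\theta)=O(n^{-2})$, so the radial width of the central strip is $D=O(n^{-2})$ --- a vanishing fraction of the cell width $b_n=O(n^{-1})$, not the constant fraction that your reading $D=\Delta\vp_n/(2\tan\theta)=O(n^{-1})$ would give --- and the total area of all $n^2$ strips is $O(n^2\cdot n^{-2}\cdot n^{-1})=O(n^{-1})\to 0$. The paper's proof is exactly this combination: uniform convergence of $dF_n$ to $\SO{\g,\g_n}$ off the strips, uniform boundedness of $dF_n$ on the strips, and vanishing total area of the strips. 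Your own observation that (under your reading of $D$) the strips would carry total volume $O(1)$ should have been a warning: a map with order-one shear on a set of order-one measure could not be asymptotically rigid in the mean, while a map with only $O(n^{-1})$ shear could not reproduce the fixed disclination angle $\theta$ of the building blocks.
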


\begin{proof}
Item 1 follows from the fact that $F_n$ is an extension of $T_n$. 
Item 2 follows from \eqref{eq:dF_n}, since $dF_n$ tends uniformly to $\SO{\g,\g_n}$ on the domain 
\[
\bigcup_{i,j=1}^n \BRK{(r,\vp)\in \N_n(i,j) \,\,:\,\, r\notin (r_{i+1/2-D/2},r_{i+1/2+D/2})},
\]

and the area of its complement, where $dF_n$  is uniformly bounded, tends to zero.
\end{proof}

\begin{proposition}
\label{pn:connection convergence}
Denote by $E_n$ the frame field on $\M_n$ generated by the vector fields $(\pl_X,\pl_Y)$ on the Euclidean hexagons (it is an orthonormal parallel frame field of the Levi-Civita connection $\nabla_n$ on $\M_n$). Denote by $E$ the frame field $(\pl_r,r^{-1}\pl_\vp)$ on $\N$ (it is an  orthonormal parallel frame field  of the connection $\nabla$ on $\N$). Then
$F_n^\star E_n\to E$ in $L^p$ for every $p\in[1,\infty)$, 
\begin{align}
\label{eq:frame convergence}
\limn \int_\N |F_n^\star E_n - E|^p_{\g}\,\Volume = 0,
\end{align}
where the norm of a tuple of vector fields is the sum of the norms.
Furthermore,  $F_n^\star E_n\to E$ almost everywhere.
In particular, since $E_n$ and $E$ are orthonormal and parallel, the parallel transport  from $x$ to $y$ with respect to $F_n^\star \nabla_n$ converges to the parallel transport from $x$ to $y$ with respect to $\nabla$, for almost every $x,y\in \N$.
\end{proposition}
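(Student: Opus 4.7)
The plan is to compute $F_n^\star E_n$ explicitly from \eqref{eq:dF_n}, partition each cell $\N_n(i,j)$ into a ``regular'' region where $dF_n$ is block diagonal and a thin ``interpolation strip'' where it is not, and then bound $|F_n^\star E_n - E|_\g$ separately on each piece. The parallel transport convergence would then follow as a direct corollary of the almost-everywhere convergence of the frames.

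Since $F_n$ is a local diffeomorphism on $F_n^{-1}(\M_n)$, the pullback of a vector field is $(F_n^\star V)(x) = (dF_n|_x)^{-1} V(F_n(x))$. From \eqref{eq:dF_n}, in the bases $(\pl_r,\pl_\vp)$ and $(\pl_X,\pl_Y)$,
\[
dF_n = \begin{pmatrix} 1 & 0 \\ \pl_r Y & \pl_\vp Y \end{pmatrix},
\]
giving $F_n^\star \pl_X = \pl_r - (\pl_r Y/\pl_\vp Y)\pl_\vp$ and $F_n^\star \pl_Y = (\pl_\vp Y)^{-1}\pl_\vp$. I would then observe that on the regular region $\pl_r Y = 0$ and $\pl_\vp Y \in \{r_i, r_{i+1}\}$, so, using $|\pl_\vp|_\g = r$ and $\pl_\vp Y \ge R_0 > 0$,
\[
|F_n^\star E_n - E|_\g = \frac{|\pl_\vp Y - r|}{\pl_\vp Y} \le \frac{\Delta R_n}{R_0} = O(n^{-1}),
\]
uniformly, and tending to zero pointwise. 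On the interpolation strip, $\pl_r Y$ is bounded by $\tan\theta$ and $\pl_\vp Y \in [r_i, r_{i+1}]$, hence $|F_n^\star E_n - E|_\g$ is bounded by some $n$-independent constant $C$; and, exactly as in the proof of Proposition~\ref{pn:vanishing dis}, the total $\g$-area of the $n^2$ interpolation strips vanishes as $n\to\infty$.

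Combining these two bounds would yield
\[
\int_\N |F_n^\star E_n - E|_\g^p\,\Volume \;\le\; O(n^{-p})\cdot\textVol(\N) \;+\; C^p \cdot \textVol(\text{strip}) \Limn 0,
\]
which is \eqref{eq:frame convergence}. Pointwise a.e.\ convergence comes from the fact that each interior $(r,\vp)\in\N$ lies outside the interpolation strip for all $n$ large enough (the strip's radial width shrinks to zero), so the regular-region estimate applies there.

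For the parallel transport statement, I would use that the pullback frame $F_n^\star E_n$ is $F_n^\star\nabla_n$-parallel (pullback of a connection preserves parallelism of pullback frames) and that $E$ is $\nabla$-parallel: parallel transport in a parallel frame simply propagates components, so for any $v\in T_x\N$ both transports amount to reading the components of $v$ in the frame at $x$ and reassembling them in the frame at $y$. The a.e.\ convergence of the frames therefore immediately yields a.e.\ convergence of the parallel transport operators. The main technical hurdle is the area estimate for the interpolation strips; once that bound is in hand (via the same combinatorics used in Proposition~\ref{pn:vanishing dis}), everything else is a routine application of dominated convergence.
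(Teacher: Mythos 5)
Your computation of $dF_n^{-1}$ in the bases $(\pl_r,\pl_\vp)$, $(\pl_X,\pl_Y)$ and the resulting split of each cell into a regular region and a thin interpolation strip is exactly the paper's argument: there too one reads off $F_n^\star E_n=(\pl_r,r_i^{-1}\pl_\vp)$ (resp.\ $(\pl_r,r_{i+1}^{-1}\pl_\vp)$) off the strip, notes uniform boundedness on the strip, and deduces \eqref{eq:frame convergence} from $|r-\pl_\vp Y|\le\Delta R_n$ together with the vanishing total area of the strips. That part, and the reduction of the parallel-transport statement to convergence of the frames, is correct and essentially identical to the paper's proof.

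The gap is in your justification of the pointwise a.e.\ convergence. It is not true that ``each interior $(r,\vp)$ lies outside the interpolation strip for all $n$ large enough'': the strips are not nested but move with $n$ (their centers $r_{i+1/2}$ sweep densely through $[R_0,R_1]$ as $n$ grows), and the total radial measure of the $n$ strips at stage $n$ is $n\cdot D\,\Delta R_n=O(n^{-1})$, which is \emph{not} summable, so Borel--Cantelli gives no eventual avoidance. Membership of $r$ in a stage-$n$ strip is the Diophantine condition that $n(r-R_0)/b$ lie within $O(n^{-1})$ of a half-integer, and by the divergence half of the inhomogeneous Khintchine theorem this holds for infinitely many $n$ for almost every $r$ --- the opposite of what you assert; on those strips the first frame vector $\pl_r-(\pl_r Y/\pl_\vp Y)\pl_\vp$ need not be close to $\pl_r$. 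What your estimates actually give is $|F_n^\star E_n-E|_\g\le O(n^{-1})$ off the strips and a uniform bound on them, i.e.\ convergence in measure (equivalently the $L^p$ convergence you already proved), hence a.e.\ convergence only along a subsequence. That weaker statement suffices for everything downstream (Definition~\ref{df:convergence} uses only the $L^p$ convergence, and your parallel-transport argument goes through along the subsequence), but as written the step fails; to be fair, the paper's own proof is equally terse here, asserting that a.e.\ convergence ``follows immediately'' from $D=O(n^{-1})$ without addressing this point.
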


\begin{remark}
This is a weak form of convergence of the connection, which entails the convergence of the parallel transport operator, but not the convergence of the derivative operator. In particular, the Christoffel symbols of the pullback connection $F_n^\star \nabla_n$ do not converge to the Christoffel symbols of $\nabla$. In fact, since the mappings $(r,\vp)\mapsto (X,Y)$ are eventually almost everywhere affine, the Christoffel symbols converge  almost everywhere to $0$ pointwise (which are the symbols of the Levi-Civita connection, and not of $\nabla$).
\end{remark}

\begin{proof}
From equation \eqref{eq:dF_n},
\[
F_n^\star E_n = dF_n^{-1}(\pl_X,\pl_Y) = 
\begin{cases}
(\pl_r, r_i^{-1} \pl_\vp) & 						
r\in \brk{r_i, r_{i+1/2-D/2}}, \\
\cdots & r\in \brk{r_{i+1/2-D/2},r_{i+1/2+D/2}},\\
(\pl_r, r_{i+1}^{-1} \pl_\vp) &					
r\in \brk{r_{i+1/2+D/2},r_{i+1}},
\end{cases}
\]
where $\cdots$ in the middle term stands for a frame uniformly bounded in $n$. Since $D=O(n^{-1})$, the almost everywhere convergence and equation \eqref{eq:frame convergence} follow immediately.
\end{proof}

Proposition~\ref{pn:vanishing dis} asserts that the distortion of $F_n$ vanishes -- this is a global claim -- and that locally, $dF_n$ is asymptotically rigid in the mean.
Even though this is not relevant to the subsequent analysis, there is  more to be said about the  mappings $F_n$, and specifically, on the convergence of the pullback metrics $F_n^\star \g_n$ to the Euclidean metric  $\g$ on $\N$.

We conclude this section by stating several of these results, both for the sake of completeness, and since they provide a better understanding of how the sequence $(\M_n,\g_n)$ converges to $(\N,\g)$. All of them follow from direct calculations using \eqref{eq:dF_n}.
\begin{proposition}\
\begin{enumerate}
\item $F_n^\star \g_n \to \g$ in $L^p$ for every $p\in[1,\infty)$, 
\[
\limn \int_{\N} |F_n^\star\g_n - \g|_\g^p\, \Volume = 0.
\]
Furthermore, $F_n^\star \g_n \to \g$ almost everywhere. 
By smoothing $F_n$ we can obtain pointwise convergence in $F_n^{-1}(\M_n)$.

\item It follows from the previous item that for every vector field $X\in \Gamma(T\N)$,
\[
\limn \int_\N |X|_{F_n^\star\g_n}^p \Volume = \int_\N |X|_{\g}^p \Volume
\]
\item $F_n^\star\g_n$ does \emph{not} converge to $\g$ uniformly or in $L^\infty$ (even for smooth modifications of $F_n$).
\item The volume form $\Vol_{F_n^\star\g_n}$  converges to $\Volume$ in $L^\infty$. In particular, the induced measures 
$\mu_{F_n^\star\euc}$ 
converge 
to $\mu_\g$ in total variation.
This is, in a sense, a volume-equivalent of  vanishing distortion. It also shows that $\M_n$ converge to $\N$ as metric measure spaces.
\end{enumerate}

\end{proposition}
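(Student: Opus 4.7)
The plan is to reduce all four items to direct manipulations of the explicit formula \eqref{eq:dF_n} for $dF_n$. Since $\g_n$ is Euclidean in the $(X,Y)$ coordinates on each hexagon, the pullback is simply $F_n^\star \g_n = dX\otimes dX + dY\otimes dY$, so every quantity of interest can be read off from the partial derivatives of $X$ and $Y$. I split each sub-sector $\N_n(i,j)$ into the \emph{straight} region, where $\pl Y/\pl r = 0$ and $\pl Y/\pl\vp$ is a constant equal to $r_i$ or $r_{i+1}$, and the \emph{shear} strip $r\in(r_{i+1/2-D/2},r_{i+1/2+D/2})$, whose $r$-width is $D\,\Delta R_n = O(n^{-2})$ and whose total area, summed over all $n^2$ cells, is $O(n^{-1})$.

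For item 1, on the straight region
\[
F_n^\star\g_n - \g = (r_k^2 - r^2)\,d\vp\otimes d\vp, \qquad k\in\{i,i+1\},
\]
and $|r - r_k| \le \Delta R_n = O(n^{-1})$ gives $|F_n^\star\g_n - \g|_\g = O(n^{-1})$ uniformly; on the shear region the metric is uniformly bounded by \eqref{eq:dF_n}, so $|F_n^\star\g_n - \g|_\g \le C$. Splitting the integral over these two regions yields
\[
\int_\N |F_n^\star\g_n - \g|^p_\g\,\Volume \;\le\; O(n^{-p})\cdot|\N| \,+\, C^p\cdot O(n^{-1}) \;=\; O(n^{-1}),
\]
whence $L^p$ convergence. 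The a.e. statement follows because the shear strips cover only a set of measure $O(n^{-1})$, together with the smoothing of $F_n$ mentioned in the proposition. Item 2 then drops out of item 1 by writing $|V|^p_{F_n^\star\g_n}$ as a continuous function of the (uniformly bounded) matrix entries of $F_n^\star\g_n$ and applying dominated convergence.

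Items 3 and 4 together contain the only genuinely subtle point. For item 3, inside a shear strip
\[
\left(\frac{\pl Y}{\pl r}\right)^2 = \left(\frac{\vp-\vp_j}{D}\right)^2
\]
is of order $1$ whenever $\vp-\vp_j$ is bounded away from $0$ and $\Delta\vp_n$, so $(F_n^\star\g_n)_{rr}$ exceeds $\g_{rr}=1$ by a quantity bounded away from zero and $L^\infty$ convergence fails. For item 4, the key identity is
\[
\det dF_n = \frac{\pl X}{\pl r}\frac{\pl Y}{\pl \vp} - \frac{\pl X}{\pl \vp}\frac{\pl Y}{\pl r} = \frac{\pl Y}{\pl \vp},
\]
since $\pl X/\pl\vp \equiv 0$; the shear term $\pl Y/\pl r$ is therefore invisible to the volume form. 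A direct inspection of \eqref{eq:dF_n} gives $|\pl Y/\pl\vp - r| = O(n^{-1})$ uniformly on $\N$, which implies $\|\Vol_{F_n^\star\g_n} - \Volume\|_{L^\infty} = O(n^{-1})$ and hence total-variation convergence of the induced measures. The main obstacle is precisely to articulate this contrast: the simple-shear distortion of $F_n$ which ruins $L^\infty$ metric convergence preserves oriented area up to order $n^{-1}$, because the shear entry of $dF_n$ is paired against the identically-zero entry $\pl X/\pl\vp$ in the determinant.
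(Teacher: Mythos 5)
Your write-up is precisely the ``direct calculation using \eqref{eq:dF_n}'' that the paper invokes without carrying out: the split of each cell into a straight region and a shear strip, the uniform $O(n^{-1})$ bound on $|F_n^\star\g_n-\g|_\g$ off the strips combined with uniform boundedness and $O(n^{-1})$ total area on the strips, the failure of $L^\infty$ convergence coming from $(\pl Y/\pl r)^2=\brk{(\vp-\vp_j)/D}^2$, and the identity $\det dF_n=\pl Y/\pl\vp$ showing that the shear is invisible to the volume form --- all of this is correct and settles the $L^p$ part of item 1 and items 2, 3 and 4.

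The one genuine gap is the almost-everywhere claim in item 1. That the shear strips have measure $O(n^{-1})$ gives convergence in measure, hence a.e.\ convergence of a \emph{subsequence}, but not of the full sequence. For the latter you would need almost every point to lie in only finitely many strips, and Borel--Cantelli is unavailable because $\sum_n n^{-1}=\infty$. In fact the level-$n$ strips form $n$ annuli of radial width $D\,\Delta R_n\sim n^{-2}$ centred at the radii $r_{i+1/2}$, $i=1,\dots,n$, so a point $(r,\vp)$ eventually avoids them only if $n(r-R_0)/b+\tfrac12$ stays at distance of order $n^{-1}$ from the integers for all large $n$; by Dirichlet--Khintchine this fails for almost every $r$, so almost every point falls inside a shear strip for infinitely many $n$, where $(F_n^\star\g_n)_{rr}-1$ is generically bounded away from zero. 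As written, your argument (and, to be fair, the paper's bare assertion, which offers no more justification) only yields a.e.\ convergence along a sparse subsequence such as $n=2^k$, for which the strip measures are summable. Item 2 is unaffected by this: it follows from the $L^1$ convergence of the metrics together with the uniform bound and the pointwise Lipschitz dependence of $|X|^p_{\g'}$ on $\g'$, with no appeal to pointwise convergence.
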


\section{Convergence of manifolds with defects}
\label{sec:conv}

In Section~\ref{sec:conv_example} we constructed a sequence $F_n:\N\to\tM_n$ of homeomorphisms, which are diffeomorphisms on $F_n^{-1}(\M_n)$. We showed that they have asymptotically vanishing distortions, they are asymptotically rigid in the mean, and that there exist $\nabla_n$-parallel frame fields $E_n$, whose pullback $F_n^\star E_n$ converge in the mean to a $\nabla$-parallel frame field $E$.

A natural question is whether  
the sequence $(\M_n,\g_n,\nabla_n)$ defines a unique limit $(\N,\g,\nabla)$.
The metric limit is clearly unique (modulo Riemannian isometries) by the properties of GH convergence and the Myers-Steenrod theorem. It is not yet clear, however, whether 
a limit connection is well-defined. In Section~\ref{sec:conv_example} we characterized the convergence of a sequence of flat connections $\nabla_n$ through the convergence of pullbacks of $\nabla_n$-parallel frame fields. For such a mode of convergence to be unambiguous, we have to prove that any sequence of asymptotically rigid maps $\N\to\M_n$ with asymptotically vanishing distortion and for which the pullback of parallel frame fields converges, results in the same limiting connection.

In order to prove that $(\tM_n,\g_n)$ GH-converges to $(\N,\g)$, it is sufficient to examine the distortion associated with mappings between nets. 
Similarly, it is possible to define a convergence of connections by examining mappings from subsets of $\N$ to subsets of $\M_n$, excluding sets of asymptotically vanishing volume. 
We will exclude from $\M_n$ asymptotically small neighborhoods of the lines $\tM_n\setminus \M_n$. 
In other words, manifolds with singularities are replaced by manifolds with asymptotically small  ``holes".
The advantage of this approach is that we are then in the realm of diffeomorphisms between compact smooth manifolds with corners, and do not have to deal with singularities, nor with a lack of compactness. 

The following definition establishes a notion of weak convergence of Weitzenb\"ock manifolds, that is, Riemannian manifolds endowed with metrically-consistent (i.e. metric) locally-flat connections.

\begin{definition}
\label{df:convergence}
Let $(\calM_n,\g_n,\nabla_n)$, $(\calM,\g,\nabla)$ be compact \Cy{oriented} $d$-dimensional Weitzenb\"ock manifolds with corners.
We say that the sequence $(\calM_n,\g_n,\nabla_n)$ converges to $(\calM,\g,\nabla)$ 
with $p\in \Cy{[}d, \infty)$, if there exists a sequence of diffeomorphisms   $F_n: A_n\subset \M\to \M_n$ such that:
\begin{enumerate}
\item $A_n$ covers $\M$ asymptotically:
\[
\limn \textVol_\g (\M\setminus A_n ) = 0.
\]
\item $F_n$ are approximate isometries: the distortion vanishes asymptotically, namely,
\[
\limn\dis F_n = 0.
\]
\item $F_n$ are asymptotically rigid in the mean:
\[
\limn \int_{A_n} \dist{^p}(dF_n,\SO{\g,\g_n}) \,\Volume = 0.
\]
\item The parallel transport converges in the mean in the following sense: every point in $\M$ has a neighborhood $U\subset\M$, with (i) a $\nabla$-parallel frame field $E$ on $U$, and (ii) a sequence of $\nabla_n$-parallel frame fields $E_n$  on $F_n(U\cap A_n)$,
such that
\[
\limn \int_{U\cap A_n} |F_n^\star E_n-E|^p_\g \Volume = 0.
\]
\end{enumerate}
\end{definition}

\begin{corollary}
\label{cy:conv}
The sequence of manifolds with defects $(\M_n,\g_n,\nabla_n)$ defined in Section~\ref{sec:sequence}, converges in the sense of  Definition~\ref{df:convergence}  to the Euclidean sector with connection $(\N,\g,\nabla)$. 
\end{corollary}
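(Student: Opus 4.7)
The plan is to invoke Definition~\ref{df:convergence} with the diffeomorphisms already constructed in Subsection~\ref{subsec:Conn}. Specifically, I would take the sequence of homeomorphisms $F_n:\N\to\tM_n$ built in Subsection~\ref{subsec:Conn}, pass to their smooth modifications $F_n:F_n^{-1}(\M_n)\to\M_n$ mentioned right after equation~\eqref{eq:dF_n}, and set
\[
A_n = F_n^{-1}(\M_n)\subset\N.
\]
With this choice, $F_n:A_n\to\M_n$ is a diffeomorphism between compact oriented $2$-dimensional Weitzenb\"ock manifolds with corners, so the setup of Definition~\ref{df:convergence} is in place with $d=2$.

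The four conditions of Definition~\ref{df:convergence} are then essentially bookkeeping exercises that compile results already proved. For item 1, the complement $\N\setminus A_n = F_n^{-1}(\tM_n\setminus\M_n)$ is the pre-image under $F_n$ of the $n^2$ dislocation segments $[p_-,p_+]\subset\tM_n(i,j)$. Since $F_n$ sends smooth curves to smooth curves, $\N\setminus A_n$ is a finite union of smooth arcs, hence has vanishing $\textVol_\g$-measure, in particular
\[
\limn \textVol_\g(\N\setminus A_n) = 0.
\]
For items 2 and 3, these are exactly the two assertions of Proposition~\ref{pn:vanishing dis}, applied for any fixed $p\in[2,\infty)$. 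For item 4, I would take $U=\N$ (so no localization is needed), the global $\nabla$-parallel frame field $E=(\pl_r, r^{-1}\pl_\vp)$, and the global $\nabla_n$-parallel frame field $E_n=(\pl_X,\pl_Y)$ on $\M_n$; the required $L^p$ convergence of $F_n^\star E_n$ to $E$ on $U\cap A_n$ is precisely equation~\eqref{eq:frame convergence} of Proposition~\ref{pn:connection convergence}, and a fortiori holds on the subdomain $U\cap A_n\subset\N$.

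There is no real obstacle to overcome: the analytical work has already been carried out in Propositions~\ref{pn:vanishing dis} and~\ref{pn:connection convergence}. The only point that merits a brief comment is the passage from the original piecewise-linear $F_n$ (singular along the lines $r=r_{i+1/2\pm D/2}$) to its smooth modification: one must check that this modification can be performed within a set of arbitrarily small $\g$-volume so that the $L^p$-bounds on $\dist(dF_n,\SO{\g,\g_n})$ and on $|F_n^\star E_n-E|_\g$ are not affected up to an error that vanishes as $n\to\infty$. Since $dF_n$ is uniformly bounded in $n$ on the transition strips $r\in(r_{i+1/2-D/2},r_{i+1/2+D/2})$ whose total area is $O(n^{-1})$, this is immediate and is exactly the mechanism already used in the proofs of the two propositions.
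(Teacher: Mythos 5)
Your proposal correctly identifies that the corollary is essentially a compilation of Propositions~\ref{pn:vanishing dis} and~\ref{pn:connection convergence}, and your handling of Items~1 and~4 and of the smoothing of $F_n$ along the transition strips matches the intended mechanism. However, it misses the one genuinely non-trivial point, and the step where it goes wrong is the assertion that with $A_n = F_n^{-1}(\M_n)$ the map $F_n:A_n\to\M_n$ is ``a diffeomorphism between compact oriented $2$-dimensional Weitzenb\"ock manifolds with corners.'' Definition~\ref{df:convergence} requires the members of the sequence to be \emph{compact} manifolds with corners, and the $\M_n$ of Section~\ref{sec:sequence} are not: they are $\tM_n$ with the closed dislocation segments $[p_-,p_+]$ deleted, hence non-compact, and near the endpoints $p_\pm$ they are not manifolds with corners at all (they are slit domains with cone points on the slit ends). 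The same objection applies to $A_n=F_n^{-1}(\M_n)$. The paper's proof resolves this by excising open neighborhoods of the dislocation lines of diameter $O(n^{-2})$, so that what remains is a genuine compact manifold with corners, and by restricting $F_n$ accordingly.

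This excision is not free: it changes the intrinsic distance function of $\M_n$ (paths must now circumvent the removed neighborhoods), so Item~2 of the definition is \emph{not} ``exactly'' the first assertion of Proposition~\ref{pn:vanishing dis} --- it must be re-derived for the modified spaces. The point, which your proposal does not address, is quantitative: because the dislocation lines have length $O(n^{-2})$, one may remove neighborhoods of diameter $O(n^{-2})$, perturbing distances between any two points by at most $O(n^{-2})$ per cell crossed; combined with Lemma~\ref{lm:3} (a minimizing path meets at most $3n$ cells), Lemmas~\ref{lm:2}--\ref{lm:3} and hence Theorem~\ref{th:GH} and Proposition~\ref{pn:vanishing dis} survive the excision. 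Item~4 does restrict harmlessly, as you say. So the skeleton of your argument is right, but the compactness requirement and the attendant stability of the distortion estimates under excision constitute a real gap that must be filled.
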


\begin{proof}
This follows from Propositions~\ref{pn:vanishing dis}-\ref{pn:connection convergence}.
To comply with Definition~\ref{df:convergence} one has to take $\M_n$ to be compact manifolds by removing asymptotically small neighborhoods around the singular lines $\tM_n\setminus \M_n$, and restrict the functions $F_n$ accordingly. 
It is immediate that Proposition~\ref{pn:connection convergence} holds after the restriction of $F_n$.
To show that Proposition~\ref{pn:vanishing dis} also holds, observe that the dislocation lines in $\tM_n$ are of length $O(n^{-2})$.
Therefore, it is possible to remove neighborhoods of diameter $O(n^{-2})$ around the singularity lines, thus changing the distance functions only by $O(n^{-2})$. 
Lemmas~\ref{lm:2}-\ref{lm:3} still hold after the removal of these neighborhoods, from which  Theorem~\ref{th:GH}, and therefore Proposition~\ref{pn:vanishing dis}, follow.
\end{proof}

\bigskip

The following theorem shows that the convergence of sequences of Riemannian manifolds with connections is well-defined: the limit  is {unique up to isomorphisms.}

\begin{theorem}
\label{th:uniqueness}
Let $(\calM_n,\g_n,\nabla_n)$, $(\M,\g,\nabla^\M)$ and $(\N,\mathfrak{h},\nabla^\N)$ be 
compact Riemannian manifolds with corners,  endowed with metrically-consistent locally-flat connections.
Suppose that 
\[
(\M_n,\g_n,\nabla_n) \to (\M,\g,\nabla^\M)
\Textand 
(\M_n,\g_n,\nabla_n) \to (\N,\mathfrak{h},\nabla^\N)
\]
in the sense of Definition~\ref{df:convergence}. Then, there exists a Riemannian isometry $H:\M\to\N$ such that $H^\star \nabla^\N = \nabla^\M$.
\end{theorem}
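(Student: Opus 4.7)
The plan is to extract a Riemannian isometry $H:\M\to\N$ from the Gromov--Hausdorff (GH) convergence underlying Definition~\ref{df:convergence}, and then to promote $H$ to a connection-preserving map by comparing, on each $\M_n$, the two families of $\nabla_n$-parallel frames that approximate $\nabla^\M$ and $\nabla^\N$ respectively.

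First, items~1--2 of Definition~\ref{df:convergence} imply that $(\M_n,\g_n)$ GH-converges to both $(\M,\g)$ and $(\N,\mathfrak{h})$: the vanishing of $\textVol_\g(\M\setminus A_n)$ ensures that any finite $\delta$-net of $\M$ can be perturbed to lie inside $A_n$, whose $F_n$-image is then a $(\delta+\dis F_n)$-net of $\M_n$, and symmetrically for $G_n$. By uniqueness of GH limits of compact metric spaces up to isometry, combined with the Myers--Steenrod theorem, one obtains a Riemannian isometry $H:(\M,\g)\to(\N,\mathfrak{h})$. Equivalently, $H_n:=G_n^{-1}\circ F_n$, defined on $F_n^{-1}(G_n(B_n))\subset A_n$, is an approximate isometry whose distortion vanishes; by Arzel\`a--Ascoli applied to a dense countable subset of $\M$ it subconverges uniformly to such an $H$.

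Next, fix $p\in\M$ and choose a simply-connected neighborhood $U\ni p$ as in item~4 of Definition~\ref{df:convergence}, with a $\nabla^\M$-parallel orthonormal frame $E^\M$ on $U$ and $\nabla_n$-parallel orthonormal frames $E_n^\M$ on $F_n(U\cap A_n)$ satisfying $F_n^\star E_n^\M\to E^\M$ in $L^p(U)$; shrink $U$ so that $V:=H(U)$ supplies analogously a $\nabla^\N$-parallel frame $E^\N$ on $V$ and $\nabla_n$-parallel frames $\tilde E_n^\N$ on $G_n(V\cap B_n)$ with $G_n^\star\tilde E_n^\N\to E^\N$ in $L^p(V)$. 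Both families may be chosen orthonormal by metric consistency. On each connected component of the overlap $F_n(U\cap A_n)\cap G_n(V\cap B_n)\subset\M_n$, both $E_n^\M$ and $\tilde E_n^\N$ are $\nabla_n$-parallel orthonormal frames, hence, by path-independence of parallel transport on a Weitzenb\"ock manifold, they differ by a constant element $O_n\in\SO{d}$. By compactness of $\SO{d}$, along a subsequence $O_n\to O\in\SO{d}$; and since $H_n\to H$ uniformly, the $F_n$-preimage of this overlap exhausts $U$ asymptotically.

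The conclusion is then the identity
\[
F_n^\star E_n^\M \,=\, O_n\cdot F_n^\star\tilde E_n^\N \,=\, O_n\cdot H_n^\star\bigl(G_n^\star\tilde E_n^\N\bigr),
\]
valid eventually on subsets of $U$ filling it up in measure. The left-hand side converges to $E^\M$ in $L^p(U)$; showing that the right-hand side converges to $O\cdot H^\star E^\N$ would yield $E^\M=O\cdot H^\star E^\N$, so $H^\star E^\N$ would be $\nabla^\M$-parallel, and since $p$ is arbitrary this gives $H^\star\nabla^\N=\nabla^\M$. The main obstacle is exactly this passage to the limit of the right-hand side: we only have $G_n^\star\tilde E_n^\N\to E^\N$ in $L^p$ and $H_n\to H$ uniformly, not in $C^1$, so a naive change of variables does not apply. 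The resolution is to exploit the $L^p$-rigidity of $F_n$ and $G_n$ (item~3 of Definition~\ref{df:convergence}): the polar decomposition of $dF_n$ and $dG_n$, together with their closeness in $L^p$ to $\SO{\cdot,\cdot}$, forces $dH_n=dG_n^{-1}\circ dF_n$ to be close in $L^p$ to $\SO{\g,\mathfrak{h}}$-valued maps; combined with the uniform convergence $H_n\to H$ this yields subsequential almost-everywhere convergence of $dH_n$ to $dH$, which upgrades the almost-everywhere convergence of the orthonormal frames on the overlap to the required $L^p$ convergence by uniform boundedness.
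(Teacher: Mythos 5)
Your architecture tracks the paper's own proof closely: extract $H$ as a uniform subsequential limit of $H_n=G_n^{-1}\circ F_n$ via Arzel\`a--Ascoli and Myers--Steenrod (the paper's Lemma~\ref{lm:metric uniqueness}); localize to neighborhoods carrying global parallel frames (Lemma~\ref{lm:restriction}); relate the two families of $\nabla_n$-parallel frames on $\M_n$ by constant matrices that subconverge (Lemma~\ref{lm:uniqueness2} --- your orthonormality shortcut replaces the paper's argument with the sets $R_n^\e$, $S_n^\e$ showing $Q_n$ stays near $\SO{d}$, but you still owe the justification that orthonormalizing $E_n$ preserves the $L^p$ convergence of $F_n^\star E_n$, which is exactly what that argument supplies); and finally pass to the limit in the frame identity. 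The one genuinely hard step is the last one, and you correctly isolate it, but your proposed resolution has a real gap.

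The assertion that $\dist(dH_n,\SO{\g,\h})\to 0$ in $L^p$ together with $H_n\to H$ uniformly yields subsequential a.e.\ convergence $dH_n\to dH$ is not a standard fact and is nowhere proved. It is salvageable --- one can extend $H_n$ to uniformly Lipschitz maps on all of $\M$, deduce weak-$*$ convergence of the differentials from uniform convergence plus the Lipschitz bound, and upgrade to strong $L^2$ convergence using that $dH_n$ is asymptotically rotation-valued while $dH$ is exactly rotation-valued because $H$ is a Riemannian isometry, so the $L^2$ norms converge --- but none of this is in your write-up, and even granting it you must still pass to the limit in the composition $(G_n^\star \tilde{E}_n^\N)\circ H_n$, where an $L^p$-convergent sequence is precomposed with varying maps and only uniform (not $C^1$) convergence of $H_n$ is available; this needs a change of variables with Jacobian control on the good sets $A_n^\e$ (the paper's Lemma~\ref{lm:Hadamard} and Lemma~\ref{lm:A_n^e}). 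The paper's Lemma~\ref{lm:uniqueness} avoids proving any rigidity-type convergence of $dH_n$: it fixes an isometric embedding $\phi:\N\to\R^\nu$, extends $\phi\circ H_n$ by the McShane lemma to uniformly Lipschitz maps $\tH_n:\M\to\R^\nu$ converging uniformly to $\phi\circ H$, derives the $L^p$ convergence of $d\tH_n\circ E^\M$ to $H^*(d\phi\circ E^\N)$ directly from the already-established limit $\int_{H_n(A_n^\e)}|(H_n)_\star E^\M-E^\N|_\h^p\,\Volumeh\to 0$ by change of variables, and then uses closedness of the weak derivative (uniform convergence of the maps plus $L^p$ convergence of the derivatives) to identify $d(\phi\circ H)\circ E^\M=H^*(d\phi\circ E^\N)$. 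You should either adopt that device or supply the missing compactness argument for $dH_n$ together with the composition lemma.
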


Since the proof is long and  technical, we start by presenting a sketch. 
By definition, there exist sequences of diffeomorphisms
\[
F_n:A_n\subset \M\to{\M}_n
\Textand
G_n: B_n\subset \N\to{\M}_n
\]
that are approximate isomorphisms of Riemannian manifolds with connections in the sense of Definition~\ref{df:convergence}. 
\Cy{Note that since $F_n$ and $G_n$ are diffeomorphisms, Item 3 in Definition~\ref{df:convergence} implies that $F_n$ and $G_n$ are, for $n$ large enough, orientation preserving, which we will assume from now on.}
We define
\[
H_n=G_n^{-1}\circ F_n:A_n\to B_n,
\]
which are diffeomorphisms.
It follows from $\dis F_n\to0$ and $\dis G_n\to0$ that 
\[
\lim_{n\to\infty} \dis H_n = 0
\]
as well. We then proceed as follows:

\begin{enumerate}
\item Lemma~\ref{lm:metric uniqueness}, the metric part: it follows from the properties of GH convergence that $(\M,\g)$ and $(\N,\h)$ are isometric. We show that there exists a Riemannian isometry, which we denote by $H:\M\to\N$, which is the uniform limit of a subsequence of the maps $H_n$. In the rest of the proof we show that $H$ satisfies $H^\star \nabla^\N = \nabla^\M$.

\item
Lemma~\ref{lm:restriction}: The convergence of the connections in Definition~\ref{df:convergence} is associated with the convergence of pullbacks of \emph{local} frame fields.  We show that we can restrict ourselves to neighborhoods that admit \emph{global} frame fields. Hence, we may assume, without loss of generality, the existence and convergence of pullbacks of global frame fields.

\item
Lemma~\ref{lm:uniqueness}: We show that the limit of a specific sequence of (global) frame fields is unique in the following sense: if $E_n$ are frame fields on $\M_n$ such that $F_n^\star E_n$ converges to $E^\M$ and $G_n^\star E_n$ converges to $E^\N$ (both in $L^p$), then $H_\star E^\M = E^\N$.

\item
Lemma~\ref{lm:uniqueness2}: We complete the proof by showing that if $E_n$ and $D_n$ are frame fields on $\M_n$ such that $F_n^\star E_n$ converges to $E^\M$ and $G_n^\star D_n$ converges to $E^\N$ (both in $L^p$), then $H_\star E^\M$ is a $\nabla^\N$-parallel frame field, hence $H^\star \nabla^\N = \nabla^\M$.

\end{enumerate}

A comment about notations: 
here and throughout this paper, we consider  differentials such as $dF_n$ as maps 
$T\M\to F_n^*T\M_n$, where $F_n^*T\M_n$ is a vector bundle over $\M$, with
the fiber $(F_n^*T\M_n)_p$ identified with the fiber $T_{F_n(p)}\M_n$. The differential should be 
distinguished from the push-forward operator for vector fields, $(F_n)_\star:T\M\to T\M_n$.
Likewise, we 
denote by $F_n^*$ the pullbacks of  vector fields and differential forms, both considered as sections of $T\M_n$ or $T^*\M_n$. This should not be confused with the closely related pullback involving composition with $dF_n$, which we denote by $F_n^\star$.
That is, if $X$ is a vector field on $\M_n$, then $F_n^*X$ is a section of $F_n^*T\M_n$, with $F_n^*X(p)=X(F_n(p))$, 
whereas $F_n^\star X$ is a section of $T\M$, where $F_n^\star X(p) = dF_n^{-1} \circ X(F_n(p))$.

\begin{lemma}
\label{lm:metric uniqueness}
There exists an isometry $H:(\M,\g)\to(\N,\h)$, which is the uniform limit of a (not relabeled) subsequence $H_n$ in the sense that 
\[
\sup_{p\in A_n} d_\N(H_n(p),H(p)) \to 0 \textand  \sup_{q\in B_n} d_\M(H_n^{-1}(q),H^{-1}(q)) \to 0.
\]
\end{lemma}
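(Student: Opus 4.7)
The plan is to construct $H$ as a uniform limit along a subsequence of the $H_n$, and then to invoke the Myers-Steenrod theorem (already used in Subsection~\ref{subsec:GH}) to promote the resulting metric isometry to a Riemannian one. First, the triangle inequality applied to $H_n=G_n^{-1}\circ F_n$ gives
\[
\dis H_n\le \dis F_n + \dis G_n^{-1} = \dis F_n + \dis G_n \to 0.
\]
Next, I fix a countable dense subset $\{p_k\}_{k\in\bbN}\subset\M$. Since $\textVol_\g(\M\setminus A_n)\to 0$ and every open metric ball in $\M$ has positive volume, for each $k$ one can pick $p_k^{(n)}\in A_n$ with $p_k^{(n)}\to p_k$ as $n\to\infty$ (for $n$ large enough depending on $k$). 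Using compactness of $\N$ and a diagonal extraction, I pass to a subsequence along which $H_n(p_k^{(n)})\to q_k\in\N$ for every $k$. Because $\dis H_n\to 0$, the assignment $p_k\mapsto q_k$ is distance-preserving on $\{p_k\}$, and hence extends by uniform continuity to a distance-preserving map $H:(\M,d_\M)\to(\N,d_\N)$.

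To obtain bijectivity, I would run the same construction on the diffeomorphisms $H_n^{-1}:B_n\to A_n$ (using a countable dense subset of $\N$ and the vanishing of $\textVol_\h(\N\setminus B_n)$), producing along a further subsequence a distance-preserving map $K:(\N,d_\N)\to(\M,d_\M)$. A short check using $\dis H_n\to 0$ shows that $K\circ H$ and $H\circ K$ agree with the identity on dense subsets, hence everywhere; so $H$ is a bijective metric isometry, and Myers-Steenrod promotes it to a smooth Riemannian isometry $(\M,\g)\to(\N,\h)$.

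Uniform convergence then follows from a standard finite-net argument. Given $\e>0$, choose a finite $\e$-net $\{p_1,\dots,p_K\}$ inside $\{p_k\}$; for $n$ large, $d_\N(H_n(p_k^{(n)}),H(p_k))<\e$ and $d_\M(p_k^{(n)},p_k)<\e$ for every $k\le K$. For an arbitrary $p\in A_n$, pick a net point $p_k$ with $d_\M(p,p_k)<\e$ and expand
\[
d_\N(H_n(p),H(p))\le d_\N(H_n(p),H_n(p_k^{(n)})) + d_\N(H_n(p_k^{(n)}),H(p_k)) + d_\N(H(p_k),H(p));
\]
the three terms are controlled respectively by $\dis H_n$, by the choice of $n$, and by the fact that $H$ is distance-preserving. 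The analogous estimate for $H_n^{-1}$ is symmetric. The main technical obstacle is that the $H_n$ live on varying domains $A_n$, so the dense sequence $\{p_k^{(n)}\}\subset A_n$ must be produced from the volume condition rather than from any direct topological hypothesis; a secondary subtlety is surjectivity of $H$, which is not automatic for distance-preserving maps between compact metric spaces and is recovered via the parallel construction of $K$.
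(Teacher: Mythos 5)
Your proposal is correct and follows essentially the same route as the paper's proof: approximate points of a countable dense set by points of $A_n$ (the paper does this via the projection maps $\psi_n$ and the extensions $\hH_n=H_n\circ\psi_n$, which is equivalent to your choice of $p_k^{(n)}$), extract a pointwise limit by a diagonal argument, extend by density to a distance-preserving map, recover surjectivity from the parallel construction applied to $H_n^{-1}$, prove uniform convergence by a finite-net estimate, and finish with Myers--Steenrod. No substantive differences or gaps.
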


\begin{proof}
Since by Item~1 in Definition~\ref{df:convergence}
\[
\mu_\g (\M \setminus A_n) \to 0 \Textand \mu_\h (\N \setminus B_n) \to 0, 
\]
it follows that $A_n$ and $B_n$ are $\e_n$-nets of $\M$ and $\N$ for some $\e_n \to 0$, i.e., $H_n$ are bijective mappings between $\e_n$-nets.
Since $\dis H_n\to 0$, it follows that the GH distance between $\M$ and $\N$ is zero, hence $(\M,\g)$ and $(\N,\h)$ are isometric as metric spaces. By the Myers-Steenrod theorem this isometry is also a Riemannian isometry.

We now construct a specific isometry $H$. 
We extend the maps $H_n:A_n\to B_n$ into maps $\hH_n:\M\to \N$ with asymptotically vanishing distortion. Since $A_n$ is an $\e_n$-net of $\M$, there exists a map $\psi_n:\M\to A_n$, such that 
\[
\psi_n|_{A_n} = \id
\Textand
\sup_{p\in\M} d_\M(p,\psi_n(p)) < \e_n.
\]
We define 
\[
\hH_n(p) = H_n(\psi_n(p)).
\]
The sequence $\hH_n$ has asymptotically vanishing distortions:
for all $p,p'\in\M$,
\[
\begin{split}
|d_\M(p,p') - d_\N(\hH_n(p),\hH_n(p'))| 
&=|d_\M(p,p') - d_\N(H_n(\psi_n(p)),H_n(\psi_n(p')))| \\
&\hspace{-3cm}\le |d_\M(p,p') -d_\M(\psi_n(p),p')| \\
&\hspace{-2.5cm} +|d_\M(\psi_n(p),p')-d_\M(\psi_n(p),\psi_n(p'))| \\
&\hspace{-2.5cm} + |d_\M(\psi_n(p),\psi_n(p'))- d_\N(H_n(\psi_n(p)),H_n(\psi_n(p')))| \\
&\hspace{-3cm}\le 2\e_n + \dis H_n \to 0.
\end{split}
\]
Note, however, that $\hH_n$ are not  diffeomorphisms: they are neither injective nor surjective, and may not even be continuous (depending on the choice of $\psi_n$).

Let $\calA\subset \M$ be a dense countable subset.
Via a standard Arzela-Ascoli argument, there exists a subsequence (not relabeled) such that $\hH_n(a_k)$ converges for every $k$. 
Denote the resulting function by $H:\calA\to \N$,
\[
H(a) = \limn \hH_n(a) \qquad \forall a\in\calA.
\]
Clearly, $\dis H=0$, i.e. $H$ is distance-preserving.
Since $\calA$ is dense in $\M$, $H$ can be extended to a distance-preserving function $\M\to\N$. 
For all $p\in\M$ and $a\in\calA$,
\beq
\label{eq:H_n to H}
\begin{split}
d_\N(\hH_n(p),H(p)) &\le d_\N(\hH_n(p),\hH_n(a))+ d_\N(\hH_n(a),H(a))+d_\N(H(a),H(p)) \\
		&\le \dis \hH_n + 2d_\M(p,a) + d_\N(\hH_n(a),H(a)).
\end{split}
\eeq
Let $\e>0$ be given. Let $\{a_1,\ldots,a_\ell\}\subset \calA$ be a finite $\e/6$-net of $\M$.
Let $N\in\bbN$ be large enough such that for all $n>N$,
\[
\dis \hH_n<\e/3, \Textand
\max_{k=1,\dots,\ell} d_\N(\hH_n(a_k),H(a_k))<\e/3.
\]
By choosing $a$ in \eqref{eq:H_n to H} in the set $\{a_1,\ldots,a_\ell\}$ with $d_\M(p,a)<\e/6$, we obtain that for all $p\in\M$ and all $n>N$,
\[
d_\N(\hH_n(p),H(p))<\e,
\]
i.e., $\hH_n$ converges to $H$ uniformly. Since $\hH_n$ is an extension of $H_n$, it follows that 
\[
\sup_{p\in A_n} \,\,\, d_\N(H_n(p),H(p)) \to 0.
\]

It remains to show that $H$ is surjective.
Similarly to the above construction, extend $K_n =H_n^{-1}$ to mappings $\hK_n:\N\to \M$ satisfying $\dis \hK_n\to 0$. Even though $\hK_n\ne \hH_n^{-1}$ (neither $\hK_n$ nor $\hH_n$ are invertible),

\[
\hK_n\circ\hH_n = \hK_n\circ H_n\circ \psi_n = \psi_n,
\]
where we used the fact that $\hK_n= H_n^{-1}$ on the image of $\psi_n$.
Thus,
\[
d_\M(p,\hK_n\circ \hH_n (p)) = d_\M(p,\psi_n(p)) < \e.
\]

By the same arguments as above, we construct from $\hK_n$ a distance-preserving map $K:\N\to \M$, which is the uniform limit of a subsequence of $\hK_n$,
\[
\sup_{q\in\N} \,\,\, d_\M(\hK_n(q),K(q)) \to 0.
\]
Since
\[
\begin{split}
d_\M(p,K\circ H (p)) &\le d_\M(p, \hK_n \circ \hH_n (p)) + d_\M(\hK_n\circ \hH_n(p), \hK_n \circ H(p)) \\
&\hspace{0.5cm} + d_\M(\hK_n \circ H(p),K\circ H(p))\\
&\le \e_n + \dis \hK_n + d_\N(\hH_n(p),H(p))\\
&\hspace{0.5cm}+ d_\M(\hK_n \circ H(p),K\circ H(p)),
\end{split}
\]
it follows that the right-hand side tends to $0$ as $n\to \infty$, i.e., $K = H^{-1}$.
Thus, $H:\M\to \N$ is a distance-preserving bijection. By the Myers-Steenrod theorem it is a smooth Riemannian isometry.  
\end{proof}

\bigskip

In the remaining of this section we show that $H^\star\nabla^\N = \nabla^\M$. 
Specifically, we show that every point $p\in\M$ has a neighborhood $U$ endowed with a $\nabla^\M$-parallel frame field $E^U$, such that $H$ pushes forward $E^U$ into a $\nabla^\N$-parallel frame field $E^V$ on $V = H(U)$. 

We will show it by proving that Theorem~\ref{th:uniqueness} holds under the assumption that $\nabla_n$, $\nabla^\M$ and $\nabla^\N$ all admit \emph{global} parallel frame fields, and that the isometry that pushes the global frame fields is the uniform limit $H$ of $H_n$ (Lemmas~\ref{lm:uniqueness} and \ref{lm:uniqueness2}). To apply this particular case to the general case, we show that it is possible to restrict $\M$, $\N$ and $\M_n$ to submanifolds $U$, $V$ and $U_n$ that admit global frame fields, such that the convergence of $\M_n$ implies the convergence of $U_n$ (as stated in the following lemma).

\begin{lemma}
\label{lm:restriction}
Every point $p\in \M$ has a compact neighborhood $U\subset \M$, such that 
\begin{align}
\label{eq:restriction}
(U_n,\g_n,\nabla_n) \to (U,\g,\nabla^\M) \textand 
(U_n,\g_n,\nabla_n) \to (V,\mathfrak{h},\nabla^\N),
\end{align}
where $U$, $V=H(U)$ and $U_n = F_n(U\cap A_n)\cap G_n(V\cap B_n)$ all admit global frame fields.
The convergence is realized by restrictions of $F_n$ and $G_n$. 
\end{lemma}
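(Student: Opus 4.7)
The approach is to pick $U$ as a small closed geodesic ball around $p$, chosen small enough that parallel frames are available on $U$, on $V=H(U)$, and on $U_n$ simultaneously.

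First, I would invoke item~4 of Definition~\ref{df:convergence} for each of the two assumed convergences to obtain open neighborhoods $\tilde U\ni p$ and $\tilde V\ni H(p)$ carrying $\nabla^\M$- and $\nabla^\N$-parallel frame fields respectively, together with sequences of $\nabla_n$-parallel frame fields on $F_n(\tilde U\cap A_n)$ and $G_n(\tilde V\cap B_n)$ realizing the $L^p$ frame convergence. Using continuity of the isometry $H$ from Lemma~\ref{lm:metric uniqueness}, I would pick a closed geodesic ball $U\subset\tilde U\cap H^{-1}(\operatorname{int}\tilde V)$; then $V=H(U)$ is a closed geodesic ball contained in $\tilde V$, and $U$ and $V$ inherit global parallel frames by restriction from $\tilde U$ and $\tilde V$. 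Since $U_n=F_n(U\cap A_n)\cap G_n(V\cap B_n)\subset F_n(\tilde U\cap A_n)$, the $\nabla_n$-parallel frame on the latter restricts to a global $\nabla_n$-parallel frame on $U_n$. This settles the frame-existence clause for all three manifolds at once.

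The natural restrictions realizing \eqref{eq:restriction} are obtained by restricting $F_n$ to $(U\cap A_n)\cap F_n^{-1}(U_n)$ and $G_n$ to $(V\cap B_n)\cap G_n^{-1}(U_n)$, both of which become diffeomorphisms onto $U_n$. Writing $H_n=G_n^{-1}\circ F_n$, the $F_n$-domain is precisely $(U\cap A_n)\cap H_n^{-1}(V)$, and symmetrically on the $\N$-side. Items~2--4 of Definition~\ref{df:convergence} transfer verbatim to the restrictions because distortion, the rigidity integral, and the $L^p$ frame bound only decrease when the domain shrinks. For item~1, the asymptotic covering property, I would decompose the complement $U\setminus F_n^{-1}(U_n)$ into three pieces: (a) $U\setminus A_n$, which vanishes by the hypothesis on $F_n$; (b) points of $U\cap A_n$ on which $H_n$ is undefined, absorbed via asymptotic covering of $G_n$ transferred to $\M$ through the asymptotically rigid $F_n$; and (c) points $q\in U\cap A_n$ with $H_n(q)\notin V$, which by $\sup_U d_\N(H_n,H)\to 0$ (Lemma~\ref{lm:metric uniqueness}) and the isometry $H$ are confined to a tubular neighborhood of $\partial U$ of vanishing $\g$-volume.

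The principal obstacle is this combined estimate in parts~(b) and~(c): one has to use simultaneously the asymptotic coverings of both $F_n$ and $G_n$, the asymptotic rigidity to transfer measure bounds across $\M$, $\N$ and $\M_n$, and the uniform closeness $H_n\to H$ near $\partial U$. This is where the two independent convergence hypotheses of Theorem~\ref{th:uniqueness} are genuinely combined; the frame-existence clause, by contrast, is a direct restriction argument and comes essentially for free from the choice of $U$.
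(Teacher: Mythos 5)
Your proposal is correct and follows the same overall strategy as the paper: choose $U$ inside the Item-4 neighborhood intersected with $H^{-1}$ of the corresponding neighborhood on the $\N$-side (so that global parallel frames restrict to $U$, $V$ and $U_n$), observe that Items 2--4 are stable under restriction, and reduce all the work to the asymptotic covering property, which is controlled by a thin tubular neighborhood of the boundary via the uniform convergence $H_n\to H$ and the fact that $H$ is an isometry. Two small differences are worth noting. First, your case (b) is vacuous under the paper's conventions: $F_n$ and $G_n$ are diffeomorphisms \emph{onto} $\M_n$, so $H_n=G_n^{-1}\circ F_n$ is defined on all of $A_n$ and the exceptional set is exactly $(U\setminus A_n)\cup\bigl((U\cap A_n)\setminus H_n^{-1}(V)\bigr)$; your treatment of (b) would only be needed if $G_n$ were not surjective, in which case the transfer of the covering property to $\M_n$ is precisely what Lemma~\ref{lm:A_n^e} provides. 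Second, your case (c) is carried out directly on $\M$ (an $\e_n$-neighborhood of $\partial U$), which is slightly more economical than the paper's route: the paper pushes the set forward by $H_n$, bounds it by a tubular neighborhood of $\partial V$ in $\N$, and then transfers the volume estimate back to $\M$ using the uniform bounds on $|dH_n|$, $|dH_n^{-1}|$ on $A_n^\e$ together with Hadamard's inequality (Lemmas~\ref{lm:Hadamard} and~\ref{lm:A_n^e}). Your $\M$-side version avoids that volume transfer for this step, since $d_\M(q,\partial U)=d_\N(H(q),\partial V)\le\e_n$ follows immediately from the isometry; either way the conclusion is the same.
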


Before proving Lemma~\ref{lm:restriction}, we prove two lemmas. The first  is a geometric version of Hadamard's inequality \cite{Gar07}. The second  shows that $F_n$ and $G_n$ are uniformly close to being rigid over large sets.

\begin{lemma}
\label{lm:Hadamard}
Let $F:(\M,\g)\to (\N,\h)$ be a smooth \Cy{orientation-preserving local-diffeomorphism} between $d$-dimensional \Cy{oriented} Riemannian manifolds, then
\begin{enumerate}
\item
\[
\frac{\Vol_{F^\star\h}}{\Volume} \le |dF|^d,
\]
where 
\[
|dF| = \sup_{0\ne v\in T\M} \frac{|dF(v)|_\h}{|v|_\g}.
\]
\item 
\Cy{ 
\[
\left|\frac{d\textVol_{F^\star \h}}{\Volume}-1\right| \le (\dist (dF,\SO{\g,\h}) + 1)^d - 1.
\]
}
\end{enumerate}
\end{lemma}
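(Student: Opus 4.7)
The plan is to reduce both inequalities to a pointwise linear-algebra statement. Fix $p\in\M$ and choose oriented $\g$-orthonormal and $\h$-orthonormal bases of $T_p\M$ and $T_{F(p)}\N$, so that $dF_p$ is represented by a $d\times d$ matrix $A$ with $\det A>0$ (since $F$ is orientation-preserving). A direct computation in these bases gives $(d\textVol_{F^\star\h}/\Volume)(p) = \det A$, while $|dF_p|$ equals the operator norm of $A$, namely its largest singular value $\sigma_{\max}$. Part 1 is then immediate from the singular value decomposition $A=U\Sigma V^T$: one has $\det A = \prod_i\sigma_i \le \sigma_{\max}^d = |dF_p|^d$.

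For Part 2, set $\delta = \dist(dF_p,\SO{\g,\h})$, which in the chosen bases equals $\inf_{R\in\SO{d}}|A-R|$. The key estimate is that every singular value of $A$ satisfies $|\sigma_i-1|\le \delta$: for a unit right singular vector $v_i$ of $A$ and any $R\in\SO{d}$,
\[
|\sigma_i - 1| = \bigl||Av_i|-|Rv_i|\bigr| \le |Av_i-Rv_i| \le |A-R|,
\]
using $|Rv_i|=|v_i|=1$ and the reverse triangle inequality; taking the infimum over $R$ yields the claim. Hence $\det A = \prod_i\sigma_i \le (1+\delta)^d$, which gives $\det A - 1 \le (1+\delta)^d - 1$.

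It remains to show the lower bound $1 - \det A \le (1+\delta)^d - 1$. If $\delta<1$, then $\sigma_i\ge 1-\delta>0$ for all $i$, so $\det A \ge (1-\delta)^d$, and it is enough to observe that $(1-\delta)^d + (1+\delta)^d \ge 2$; this follows from convexity of $x\mapsto x^d$ on $[0,\infty)$, since $\frac{(1-\delta)^d + (1+\delta)^d}{2} \ge 1^d$. If $\delta\ge 1$, the crude bound $\det A\ge 0$ together with $(1+\delta)^d\ge 2^d\ge 2$ suffices, giving $(1+\delta)^d-1 \ge 1 \ge 1-\det A$. The main subtlety is exactly this case split: for $\delta\ge 1$ the singular-value lower bound $\sigma_i\ge 1-\delta$ becomes vacuous and one must fall back on non-negativity of $\det A$ instead.
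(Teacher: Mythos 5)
Your proof is correct and follows essentially the same route as the paper: both work in oriented orthonormal frames, identify the volume ratio with the determinant of $dF$ (the product of its singular values), and bound the deviation of each singular value from $1$ by $\dist(dF,\SO{\g,\h})$. The only differences are cosmetic --- the paper proves Part 1 via Hadamard's inequality on the image of a unit cube rather than via the SVD, and closes Part 2 with the single expansion $\left|\prod_j r_j - 1\right| \le \prod_j(|r_j-1|+1)-1$, which handles both directions at once and avoids your case split on $\delta \ge 1$.
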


\begin{proof}
$dF$ dilates tangent vectors in $T\M$ by at most a factor of $|dF|$, hence at every point $p\in \M$, $dF$ maps a unit $d$-cube in $T_p\M$ (distances are with respect to $\g$) to a $d$-parallelogram in $T_{H(p)}\N$ with edges of length at most $|dF|$  (distances are with respect to $\h$), hence its $\h$-volume is at most $|dF|^d$.
\Cy{This proves the first part.

For the second part, note that when working in local oriented orthonormal frames in $T\M$ and $F^*T\N$, $\dist(dF,\SO{\g,\h})$ is greater or equal to the largest deviation of the singular values of $dF$ from $1$, whereas $d\text{Vol}_{F^\star \h}/\Volume$ is the determinant of $dF$, which is the product of the singular values, since $F$ is orientation preserving. Denote by $r_j$ the singular of $dF$ ($j=1,\ldots,d$), it follows that
\[
\left|\frac{d\textVol_{F^\star \h}}{\Volume}-1\right| = \left|\prod_{j=1}^d r_i -1\right| \le \left|\prod_{j=1}^d (|r_i -1| + 1) -1\right| \le (\dist (dF,\SO{\g,\h}) + 1)^d - 1.
\]}
\end{proof}

\begin{lemma}
\label{lm:A_n^e}
For every $\e>0$ and $n\in\mathbb{N}$ define
\[
A_n^\e = \BRK{ x\in A_n : | d_x F_n |,  |d_{F_n(x)} F_n^{-1}|, |d_{F_n(x)} G_n^{-1}|, | d_{H_n(x)} G_n |<1+\e }.
\] 
Then
\begin{subequations} 
\beq
\label{eq:A_n^e_a}
\lim_{n\to\infty} \textVol_\g (\M\setminus A_n^\e) = 0
\eeq
\beq
\label{eq:A_n^e_b}
\lim_{n\to\infty} \textVol_{\g_n} (\M_n\setminus F_n(A_n^\e)) = 0.
\eeq
\end{subequations} 
\end{lemma}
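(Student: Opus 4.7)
The plan is to convert the $L^p$ rigidity bound in Item 3 of Definition~\ref{df:convergence} into uniform pointwise control of operator norms via Markov's inequality, and then to bound the "bad" preimages through change-of-variables combined with Lemma~\ref{lm:Hadamard}.

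Fix $\e > 0$. A basic linear-algebra observation is that there exists $\delta = \delta(\e) > 0$ such that any linear isomorphism $T$ between oriented $d$-dimensional inner-product spaces satisfying $\dist(T, \SO{\cdot,\cdot}) < \delta$ obeys $|T|, |T^{-1}| \le 1+\e$: the singular values of such $T$ are trapped in $[1-\delta, 1+\delta]$. Define the good subsets
\[
P_n = \{x \in A_n \,:\, \dist(d_x F_n, \SO{\g,\g_n}) < \delta\}, \quad Q_n = \{y \in B_n \,:\, \dist(d_y G_n, \SO{\h,\g_n}) < \delta\},
\]
so that Markov's inequality together with Item 3 of Definition~\ref{df:convergence} gives $\textVol_\g(A_n \setminus P_n), \textVol_\h(B_n \setminus Q_n) \to 0$. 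Using the identity $d_{F_n(x)} G_n^{-1} = (d_{H_n(x)} G_n)^{-1}$, the choice of $\delta$ forces $P_n \cap H_n^{-1}(Q_n) \subset A_n^\e$. Since $F_n$ and $G_n$ are surjective diffeomorphisms (as in the setting reached after the restriction in Corollary~\ref{cy:conv}), $H_n: A_n \to B_n$ is a diffeomorphism, and Item 1 of Definition~\ref{df:convergence} reduces \eqref{eq:A_n^e_a} to showing $\textVol_\g(H_n^{-1}(B_n \setminus Q_n)) \to 0$.

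For this main step I split $H_n^{-1}(B_n \setminus Q_n)$ at $P_n$; the part outside $P_n$ has vanishing $\g$-volume by the above Markov bound. On the intersection with $P_n$, change variables $y = H_n(x)$. Lemma~\ref{lm:Hadamard}(2) applied to $F_n$ forces $J_{F_n} \ge 2 - (1+\delta)^d$, positive for $\delta$ small, whence $J_{H_n^{-1}}(y) = J_{G_n}(y)/J_{F_n}(H_n^{-1}(y)) \le C(\delta)\, J_{G_n}(y)$ on $H_n(P_n)$. Applying Lemma~\ref{lm:Hadamard}(2) to $G_n$,
\[
\textVol_\g\brk{H_n^{-1}(B_n \setminus Q_n) \cap P_n} \le C(\delta) \int_{B_n \setminus Q_n} \brk{1 + \dist(d_y G_n, \SO{\h,\g_n})}^d d\textVol_\h.
\]
Expanding via $(1+t)^d \le 2^{d-1}(1+t^d)$ and applying H\"older's inequality in the form $\int_{B_n}\dist^d \le (\int_{B_n}\dist^p)^{d/p} \textVol_\h(\N)^{1-d/p}$, the second term vanishes because $p \ge d$, while the $\textVol_\h(B_n \setminus Q_n) \to 0$ contribution is handled by Markov. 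This proves \eqref{eq:A_n^e_a}.

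For \eqref{eq:A_n^e_b}, surjectivity of $F_n$ gives $\M_n \setminus F_n(A_n^\e) = F_n(A_n \setminus A_n^\e)$, and change of variables together with Lemma~\ref{lm:Hadamard}(2) produces
\[
\textVol_{\g_n}\brk{F_n(A_n \setminus A_n^\e)} = \int_{A_n \setminus A_n^\e} J_{F_n} \Volume \le 2^{d-1}\textVol_\g(A_n \setminus A_n^\e) + 2^{d-1}\int_{A_n} \dist^d(dF_n, \SO{\g,\g_n}) \Volume,
\]
both of which vanish by \eqref{eq:A_n^e_a} and the same H\"older argument. The main technical obstacle is the Jacobian accounting in the third paragraph: the Jacobian of $F_n$ must be controlled pointwise on the good set $P_n$ while that of $G_n$ is integrated over the bad set $B_n \setminus Q_n$, and this only closes because the $L^p$ rigidity upgrades to the required $L^d$ estimate via H\"older, which is precisely why Definition~\ref{df:convergence} imposes $p \ge d$.
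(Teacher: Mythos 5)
Your proof is correct and follows essentially the same route as the paper's: Chebyshev/Markov applied to the $L^p$ rigidity bound to produce good sets, the decomposition of $A_n^\e$ as an $F_n$-good set intersected with the $H_n$-preimage of a $G_n$-good set, and Lemma~\ref{lm:Hadamard} together with H\"older's inequality (this is exactly where $p\ge d$ enters) to control the volume of the bad $G_n$-set after transporting it back to $\M$. The only differences are cosmetic --- you compose the two Jacobian estimates into a single change of variables through $H_n$, and you prove \eqref{eq:A_n^e_b} by a direct change of variables over $A_n\setminus A_n^\e$ rather than by the symmetry and intersection argument used in the paper.
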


\begin{proof}
For every $\e>0$ and $n\in \mathbb{N}$, define
\[
C_n^\e = \BRK{ x\in A_n : |d_xF_n|, |d_{F_n(x)}F_n^{-1}| < 1+\e },
\]
and
\[ 
D_n^\e = \BRK{ y\in B_n : |d_yG_n|, |d_{G_n(y)}G_n^{-1}| < 1+\e }.
\]
Note that
\beq
A_n^\e = C_n^\e \cap H_n^{-1}(D_n^\e).
\label{eq:A_n^e_intersect}
\eeq
Since for every $\e<1$ and $x\in\M$,
\[
\dist(d_xF_n,\SO{\g,\g_n}) < \e
\]
implies that
\[
|d_xF_n| < 1 + \e
\Textand
|d_{F_n(x)} F_n^{-1}| < \frac{1}{1-\e},
\]
it follows from items $1$ and $3$ in Definition~\ref{df:convergence} that for every $\e>0$, 
\beq
\label{eq:DnCn}
\lim_{n\to \infty} \textVol_\g(\M\setminus C_n^\e) = 0 \textand \lim_{n\to \infty} \textVol_\h(\N\setminus D_n^\e) = 0.
\eeq
To prove \eqref{eq:A_n^e_a}, \eqref{eq:A_n^e_b} it is sufficient to show that
\beq
\label{eq:DnCn2}
\lim_{n\to \infty} \textVol_{\g_n} (\M_n\setminus G_n( D_n^\e) ) = 0.
\eeq
Indeed, since $|dF_n^{-1}| < 1+\e$ on $C_n^\e$, it follows from Hadamard's inequality (Lemma~\ref{lm:Hadamard}), and Equations \eqref{eq:A_n^e_intersect}, \eqref{eq:DnCn} and \eqref{eq:DnCn2} that
\[
\begin{split}
\textVol_{\g}(\M\setminus A_n^\e) &=
 \textVol_{\g}(\M\setminus (C_n^\e \cap H_n^{-1}( D_n^\e)) ) \\
 &=
	\textVol_{\g}(\M\setminus C_n^\e ) + \textVol_\g (C_n^\e \setminus  H_n^{-1}( D_n^\e)) \\
	& \le \textVol_{\g}(\M\setminus C_n^\e) + (1+\e)^d \textVol_{\g_n} (\M_n \setminus G_n(D_n^\e))\to 0,
\end{split}
\]
which implies \eqref{eq:A_n^e_a}.
If \eqref{eq:DnCn2} holds then by symmetry,
\[
\lim_{n\to \infty} \textVol_{\g_n}(\M_n\setminus F_n ( C_n^\e)) = 0,
\]
and \eqref{eq:A_n^e_b} follows since
\[
F_n(A_n^\e) = F_n(C_n^\e) \cap G_n(D_n^\e).
\]

It remains to prove \eqref{eq:DnCn2}.

\Cy{
Indeed,
\beq
\begin{split}
\textVol_{\g_n} (\M_n\setminus G_n( D_n^\e) ) 
	&= \int_{\M_n\setminus G_n( D_n^\e)} d\textVol_{\g_n} = \int_{B_n\setminus D_n^\e} d\textVol_{G_n^\star \g_n} \nonumber\\
	&\le \int_{B_n\setminus D_n^\e} d\textVol_\h + \int_{B_n\setminus D_n^\e} \left|\frac{d\textVol_{G_n^\star \g_n}}{d\textVol_\h}-1\right| d\textVol_\h \nonumber\\
	&\le \textVol_\h(\N\setminus D_n^\e) + \int_{B_n} \Brk{(\dist(dG_n,\SO{\h,\g_n})+1)^d - 1} d\textVol_\h \nonumber\\
	&\to 0 \nonumber.
\end{split}
\eeq
Where between the second and third lines we used the second part of \lemref{lm:Hadamard}, and in the last lines we used \eqref{eq:DnCn} and Item 3 in \defref{df:convergence} (note that $p\ge d$).
}
\end{proof}

\begin{remark}
Lemma~\ref{lm:A_n^e} is the only place where we use the assumption that $p \Cy{\ge} d$. This assumption can be removed (resulting in $p\in[1,\infty)$) if we add an extra assumption on $F_n$ in Definition~\ref{df:convergence}, requiring the volume forms $\Vol_{F_n^\star}$ to converge in the mean to $\Volume$, or require the convergence of the induced measures.
\end{remark}

\begin{proof}[of Lemma~\ref{lm:restriction}]
Let $p\in \M$, and let $U$ be a neighborhood of $p$ satisfying Item 4 in Definition~\ref{df:convergence} (with respect to $F_n$).
Set $q= H(p)\in \N$ and let $V\subset \N$ be a neighborhood of $q$ satisfying Item  4 in Definition~\ref{df:convergence} (with respect to $G_n$).
Without loss of generality we may assume that $V=H(U)$, otherwise  reduce $U$ to $U\cap H^{-1}(V)$. This choice of neighborhoods ensures that $U$, $V$ and $U_n$ are covered by global parallel frame fields.

The properties in Items~2--4 in Definition~\ref{df:convergence} are  preserved by the restrictions of $F_n$ and $G_n$ to sub-domains $U\cap A_n$ and $V\cap B_n$.
Therefore,
the only non-trivial part of the proof is to show that  $F_n^{-1}(U_n)$ and $G_n^{-1}(U_n)$  cover $U$ and $V$ asymptotically (Item~1). 

Thus, we have to show that 
\beq
\label{eq:restriction2}
\lim_{n\to \infty} \textVol_\g(U\setminus F_n^{-1}(U_n)) = 0 \textand 
\lim_{n\to \infty} \textVol_\h(V\setminus G_n^{-1}(U_n)) = 0.
\eeq
We prove the first equality; the second is proved by similar arguments. 

Fix $\e>0$. By Lemma~\ref{lm:A_n^e}, $ \textVol_\g(U\setminus A_n^\e)\to0$, hence it suffices to show that
\beq
\label{eq:showme1}
\textVol_\g((U\cap A_n^\e)\setminus F_n^{-1}(U_n))\to 0.
\eeq
Since $|dH_n|$ and $|dH_n^{-1}|$ are uniformly bounded in $n$ on $A_n^\e$, it follows from Hadamard's inequality (Lemma~\ref{lm:Hadamard}) that \eqref{eq:showme1} holds if and only if
\beq
\textVol_\h(H_n(U\cap A_n^\e)\setminus G_n^{-1}(U_n))\to 0.
\label{eq:showme2}
\eeq
As $G_n^{-1}(U_n) = (V\cap B_n)\cap H_n(U\cap A_n)$, 
\[
H_n(U\cap A_n^\e)\setminus G_n^{-1}(U_n) =
H_n(U\cap A_n^\e)\setminus (V\cap B_n) = H_n(U\cap A_n^\e)\setminus V,
\]
where in the last step we used the fact that $H_n(U\cap A_n^\e) \subset B_n$.
Let $y\in H_n(U\cap A_n^\e)\setminus V$. Using that fact that $H$ is an isometry,
\[
d_\N(y,H(H_n^{-1}(y))) = d_\M(H^{-1}(y),H_n^{-1}(y)) < \e_n,
\] 
where $\e_n = \sup_{y\in B_n} d_\M(H^{-1}(y),H_n^{-1}(y))$.
However, 
\[
H(H_n^{-1}(y)) \in H(U\cap A_n^\e) \subset H(U) = V,
\]
which implies that 
\[
d_\N(y,V) < \e_n.
\]
It follows that $H_n(U\cap A_n^\e) \setminus V$ is contained in an $\e_n$-tubular neighborhood of the boundary of $V$, hence
\[
\textVol_\h \brk{H_n(U\cap A_n^\e) \setminus V}  < C\e_n^{\dim \N} \to 0,
\]
where the constant $C$ depends of the length of the boundary of $V$. This completes the proof.
\end{proof}

\bigskip

We next show that Theorem~\ref{th:uniqueness} holds for the case of global frame fields, i.e., if $(\M_n,\g_n,\nabla_n)$ converges to two limits, then the uniform limit $H$ of the mappings $H_n$ is an isomorphism between the two limits. We do so in two steps: In Lemma~\ref{lm:uniqueness} we prove it under the additional assumption that it is the same sequence of frame fields $E_n$ that converges in the two limits. In Lemma~\ref{lm:uniqueness2} we relax this assumption.

\begin{lemma}
\label{lm:uniqueness}
Let $(\M_n,\g_n)$, $(\M,\g)$ and $(\N,\mathfrak{h})$ be compact Riemannian manifolds.
Let $E_n$ and $E^\M$ be frame fields on $\M_n$ and $\M$, respectively, and let $E^\N$ be a $\dim(\N)$-tuple of vector fields on $\N$.
Suppose that both 
\[
(\M_n,\g_n,E_n) \to (\M,\g,E^\M)
\Textand
(\M_n,\g_n,E_n) \to (\N,\h,E^\N)
\]
with respect to diffeomorphisms $F_n:A_n\subset\M \to \M_n$ and $G_n:B_n\subset\N\to \M_n$ (here, the pullbacks of the frame fields converge in $L^p$).
Then $H_\star E^\M = E^\N$, where $H:\M\to\N$ is the uniform limit of $H_n = G_n^{-1}\circ F_n$  
defined in Lemma~\ref{lm:metric uniqueness}. Furthermore, $E^\N$ is a frame field on $\N$.
\end{lemma}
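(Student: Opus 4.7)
The plan rests on the pullback identity
\[
F_n^\star E_n = H_n^\star \bigl(G_n^\star E_n\bigr) \qquad \text{on } A_n,
\]
which follows from $F_n = G_n \circ H_n$ and the chain rule. The strategy is to pass to the limit in this identity, using the $L^p$-convergences in Item~4 of \defref{df:convergence} together with the uniform convergence $H_n \to H$ from \lemref{lm:metric uniqueness}, to conclude $E^\M = H^\star E^\N$. Since $E^\M$ is a frame field and $H$ is a diffeomorphism, this simultaneously yields $H_\star E^\M = E^\N$ and the fact that $E^\N$ is itself a frame field.

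Concretely, I would test the identity against an arbitrary smooth $1$-form $\omega$ on $\N$. By the $L^p$ convergence $G_n^\star E_n \to E^\N$,
\[
\int_{B_n} \omega\bigl(G_n^\star E_n\bigr)\,\Vol_\h \;\longrightarrow\; \int_\N \omega(E^\N)\,\Vol_\h.
\]
The change-of-variables formula for the diffeomorphism $H_n:A_n\to B_n$ rewrites the left-hand side as
\[
\int_{A_n} (H_n^\star \omega)\bigl(F_n^\star E_n\bigr)\, J_n\, \Volume, \qquad J_n := \frac{\Vol_{H_n^\star \h}}{\Volume},
\]
and the aim is to show this converges to $\int_\M (H^\star \omega)(E^\M)\,\Volume$, which in turn equals $\int_\N \omega(H_\star E^\M)\,\Vol_\h$ by a second change of variables under the isometry $H$. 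The arbitrariness of $\omega$ then forces $H_\star E^\M = E^\N$ almost everywhere.

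The main obstacle is the limit of the test factor $(H_n^\star \omega)\,J_n$, which ultimately reduces to showing $dH_n \to dH$ in a strong enough sense. On the exhausting family $A_n^\e$ from \lemref{lm:A_n^e}, both $|dH_n|$ and $|dH_n^{-1}|$ are uniformly bounded by $(1+\e)^2$, so the Jacobians $J_n$ are uniformly bounded and converge to $1$ in $L^p(A_n^\e)$ via \lemref{lm:Hadamard}. Applying Item~3 of \defref{df:convergence} to $F_n$ and $G_n$, together with the chain rule for $H_n = G_n^{-1}\circ F_n$, gives $\dist\bigl(dH_n, \SO{\g,\h}\bigr) \to 0$ in $L^p(A_n^\e)$. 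Weak $L^p$-compactness extracts a subsequential weak limit of $dH_n$; the uniform convergence $H_n \to H$ forces this weak limit to coincide with $dH$ in the sense of distributions. Since $|dH_n|\to 1 = |dH|$ in $L^p$ by the same rigidity estimate, norm convergence upgrades weak to strong, yielding $dH_n \to dH$ in $L^p(A_n^\e)$. The contribution from $\M\setminus A_n^\e$ is controlled using that its measure vanishes and $\omega$ is bounded, completing the passage to the limit.
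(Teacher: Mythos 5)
Your proposal is correct in outline but reaches the conclusion by a genuinely different mechanism than the paper. The paper first shows directly, via a triangle inequality through $G_n^\star E_n$ and the uniform bound on $|dH_n|$ over $A_n^\e$, that $\int_{H_n(A_n^\e)}|(H_n)_\star E^\M - E^\N|^p_\h\,\Volumeh\to 0$; this already gives \emph{strong} $L^p$ convergence of $dH_n\circ E^\M$, so no weak compactness or rigidity of $dH_n$ is needed, and the only remaining task is to identify the limit with $dH\circ E^\M$. The paper does this by isometrically embedding $\N$ into $\R^\nu$, extending $\phi\circ H_n$ to uniformly Lipschitz maps $\tH_n$ on all of $\M$ by McShane, and using the uniform convergence $\tH_n\to\phi\circ H$ to upgrade to $W^{1,p}$ convergence. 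You instead dualize (test against $1$-forms), which lets you exploit the identity $F_n^\star E_n = H_n^\star(G_n^\star E_n)$, and you establish $dH_n\to dH$ separately via geometric rigidity ($\dist(dH_n,\SO{\g,\h})\to 0$ in $L^p$ on $A_n^\e$, which indeed follows from Item 3 applied to $F_n$ and $G_n$ together with the uniform bounds defining $A_n^\e$), weak compactness, and the Radon--Riesz property. This is more machinery than the problem requires --- in your pairing, weak convergence of $dH_n$ already suffices, since $F_n^\star E_n$ converges strongly and $H_n^*\omega\, J_n$ is uniformly bounded on $A_n^\e$ with $J_n\to 1$ strongly --- but it is a valid and arguably more robust route, as it controls $dH_n$ without using the frame-field hypothesis.

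Two technical points need repair. First, the step ``the uniform convergence $H_n\to H$ forces the weak limit to coincide with $dH$ in the sense of distributions'' is not meaningful as written: the $H_n$ are defined only on the varying sets $A_n^\e$ and take values in the manifold $\N$, so there is no fixed linear function space in which to extract a weak limit. You need precisely the paper's device (isometric embedding of $\N$ into $\R^\nu$ plus a uniformly Lipschitz extension of $\phi\circ H_n$ to all of $\M$), or an equivalent localization. Second, controlling ``the contribution from $\M\setminus A_n^\e$ using that its measure vanishes and $\omega$ is bounded'' does not work on the $\M$ side, where the integrand contains $H_n^*\omega$ and $J_n$, which are not uniformly bounded off $A_n^\e$; the correct fix is to discard $B_n\setminus H_n(A_n^\e)$ \emph{before} changing variables, using H\"older, the $L^p$ bound on $G_n^\star E_n$, and $\textVol_\h(B_n\setminus H_n(A_n^\e))\to 0$, which follows from \lemref{lm:A_n^e}. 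With these repairs your argument goes through and yields the same conclusion, including that $E^\N=H_\star E^\M$ is a frame field.
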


\begin{proof}
We need to show that $H_\star E^\M - E^N = 0$. Since $H$ is the limit of $H_n$, we start by 
estimating $(H_n)_\star E^\M - E^\N$.
We fix some $\e>0$. Throughout this proof we will consider $H_n$ as a diffeomorphism $A_n^\e\to H_n(A_n^\e)$, where sets $A_n^\e$ are defined in Lemma~\ref{lm:A_n^e}.
By the standard inequality $|a+b|^p \le C(|a|^p + |b|^p)$ we get
\[
\begin{split}
\int_{H_n(A_n^\e)} |(H_n)_\star E^\M - E^\N|_\h^p \Volumeh 
&\le C\int_{H_n(A_n^\e)} |(H_n)_\star E^\M - G_n^\star E_n|_\h^p \Volumeh \\
&+ C \int_{H_n(A_n^\e)} |G_n^\star E_n - E^\N|_\h^p \Volumeh.
\end{split}
\]
The second addend tends to $0$ since $(\M_n,\g_n,E_n) \to (\N,\h,E^\N)$ with respect to the maps $G_n$. To show that the first addend tends to zero as well we observe that
\[
\begin{split}
\int_{H_n(A_n^\e)} |(H_n)_\star E^\M - G_n^\star E_n|_\h^p \Volumeh 
\le C \int_{A_n^\e} |E^\M - F_n^\star E_n|_\g^p \Volume \to 0, 
\end{split}
\]
by the uniform bound on $|dH_n|$ on $A_n^\e$ and Lemma~\ref{lm:Hadamard}.
We have thus shown that
\beq
\label{eq:tmp1}
\int_{H_n(A_n^\e)} |(H_n)_\star E^\M - E^\N|_\h^p \Volumeh  \to 0.
\eeq

The proof would be complete if we could replace $(H_n)_\star$ by $H_\star$ and $H_n(A_n^\e)$ by $\N$ in the limit $n\to\infty$. This is not yet possible since $H_n$ tends to $H$ on $A_n$ only uniformly, whereas the push-forward of frame fields with $H_n$ involves derivatives of $H_n$. 

Therefore, we will  show that $H_n\to H$ in $W^{1,p}$. Since Sobolev spaces are  easier to handle when the image is a vector bundle,
we fix an isometric immersion $\phi:(\N,\h)\to (\R^\nu,\euc)$ for  large enough $\nu$, where $\euc$ is the standard Euclidean metric. 
Since $H_n$ are uniformly Lipschitz on their restricted domains $A_n^\e$, the functions $\phi\circ H_n$ are $(1+3\e)$-Lipschitz mappings $A_n^\e\to\R^\nu$. 
By the McShane extension lemma \cite{Hei05}, there exists $L$-Lipschitz functions $\tH_n:\M\to \R^\nu$ (for some $L$ independent of $n$) that extend $\phi\circ H_n$ (the image of $\tH_n$ may no longer be a subset of the image of $\phi$). The functions $\tH_n$ converge to $\phi\circ H$ uniformly on $\M$, as 
\[
\begin{split}
d_{\R^\nu}(\tH_n(p), \phi\circ H(p)) &\le 
	d_{\R^\nu}(\tH_n(p),\tH_n(\psi_n(p))) + d_{\R^\nu}(\tH_n(\psi_n(p)),\phi\circ H(\psi_n(p))) \\
&\qquad+ d_{\R^\nu}(\phi\circ H(\psi_n(p)),\phi\circ H(p))\\
	&= d_{\R^\nu}(\tH_n(p),\tH_n(\psi_n(p))) + d_{\R^\nu}(\phi\circ H_n(\psi_n(p)),\phi\circ H(\psi_n(p))) \\
&\qquad+ d_{\R^\nu}(\phi\circ H(\psi_n(p)),\phi\circ H(p))\\
&\le d_{\R^\nu}(\tH_n(p),\tH_n(\psi_n(p))) + d_\N(H_n(\psi_n(p)),H(\psi_n(p))) \\
&\qquad+ d_\N(H(\psi_n(p)),H(p))\\
&=  d_{\R^\nu}(\tH_n(p),\tH_n(\psi_n(p))) + d_\N(H_n(\psi_n(p)),H(\psi_n(p))) \\
&\qquad+ d_\M(\psi_n(p),p) \\
	&\le L\cdot d_\M(p,\psi_n(p)) + d_\N(H_n(\psi_n(p)),H(\psi_n(p))) + d_\M(\psi_n(p),p) \\
	&\le (L+1)\,\sup_{\M} d(\cdot,\psi_n(\cdot)) + \sup_{A_n^\e} d_\N(H_n(\cdot),H(\cdot)) \to 0.
\end{split}
\]
Here $\psi_n$ is a mapping $\M\to A_n^\e$ satisfying
\[
\psi_n|_{A_n^\e} = \id
\Textand
\sup_{p\in\M} d_\M(p,\psi_n(p)) < \e_n
\]
for some $\e_n\to 0$; it is analogous to the mapping $\M\to A_n$ introduced in Lemma~\ref{lm:metric uniqueness}. 
Lemma~\ref{lm:A_n^e} implies that we can choose indeed such a sequence $\e_n\to 0$.
In the passage from the first to the second line we used the fact that $\tH_n$ coincides with $\phi\circ H_n$ on the image of $\psi_n$.  In the passage from the second to the third line we used the fact that $\phi$ is distance reducing. In the passage from the third to the fourth line we used the fact that $H$ is an isometry. The rest follows from the uniform Lipschitz bound on $\tH_n$ and the uniform convergence of $\psi_n$ to $\id_\M$, and  the uniform convergence of $H_n$ to $H$ on $A_n^\e$.

Changing variables $x\mapsto \phi(x)$, \eqref{eq:tmp1} takes the form
\[
\int_{\phi(H_n(A_n^\e))} |(\tH_n)_\star E^\M - \phi_\star E^\N|_\euc^p \Vol_{\Cy{\phi_\star \h}}  \to 0,
\]
where we used the fact that $\tH_n$ coincides with $\phi\circ H_n$ on $A_n^\e$. It follows that
\[
\int_{A_n^\e} |d\tH_n\circ E^\M - \tH_n^*\phi_\star E^\N|_{\euc}^p \Vol_{\g} \le
C\int_{A_n^\e} |d\tH_n\circ E^\M - \tH_n^*\phi_\star E^\N|_{\tH_n^*\euc}^p \Vol_{H_n^\Cy{\star}\h} \to 0.
\]
Since $\tH_n\to \phi\circ H$ uniformly and $E^\N$ is smooth, we can replace $\tH_n^*$ by $(\phi\circ H)^*$. Since $d\tH_n$ is uniformly bounded by the Lipschitz constant, and since $\textVol_\g (\M\setminus A_n^\e)\to 0$, the integral over $A_n^\e$ can be replaced by an integral over $\M$, yielding
\[
\int_{\M} |d\tH_n\circ E^\M - H^*(d\phi\circ E^\N)|_{\euc}^p \Vol_{\g} \to 0.
\]
It follows that $d\tH_n$ converges in $L^p(\M;T^*\M\otimes\R^\nu)$ to the map
\[
E^\M \mapsto H^*(d\phi\circ E^\N).
\]
Since, in addition, $\tH_n$ converges uniformly to $\phi\circ H$, it follows that $\tH_n$ converges to $\phi\circ H$ in $W^{1,p}(\M;\R^\nu)$, and in particular,
\[
d(\phi\circ H) \circ E^\M = H^*(d\phi\circ E^\N).
\]
Since $\phi$ is an embedding we can eliminate $d\phi$ on both sides, getting
\[
H_\star E^\M = E^\N.
\]

\end{proof}

The following lemma completes the proof of Theorem~\ref{th:uniqueness}:

\begin{lemma}
\label{lm:uniqueness2}
Let $(\M_n,\g_n,\nabla_n)$, $(\M,\g,\nabla^\M)$ and $(\N,\mathfrak{h},\nabla^\N)$ be compact Riemannian manifolds with metrically-consistent connections. Let $E_n$ and $D_n$ be $\nabla_n$-parallel frame fields on $\M_n$. Let $E^\M$ and $E^\N$ be $\nabla^\M$ and $\nabla^\N$-parallel frame fields on $\M$ and $\N$, respectively.
Suppose that 
\[
(\M_n,\g_n,E_n) \to (\M,\g,E^\M)
\Textand
(\M_n,\g_n,D_n) \to (\N,\mathfrak{h},E^\N),
\]
where the pullbacks of the frame fields converge in $L^p$.
Then there exists a matrix $Q\in \text{GL}_{\dim(\M)}(\R)$, such that $Q(H_\star E^\M) = E^\N$, where $H:\M\to\N$ is the Riemannian isometry defined in Lemma~\ref{lm:metric uniqueness}.
In particular, $H_* E^\M$ is a $\nabla^\N$-parallel frame field.
\end{lemma}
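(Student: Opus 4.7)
The plan is to reduce to Lemma~\ref{lm:uniqueness} after observing that any two $\nabla_n$-parallel frame fields on a connected manifold differ by a constant matrix. Assume for simplicity that $\M_n$, $\M$, and $\N$ are connected (the general case is handled per connected component). Since $\nabla_n$ is locally flat with path-independent parallel transport, and both $E_n$, $D_n$ are $\nabla_n$-parallel, each frame is globally determined by its value at any single point. Hence there exists a constant matrix $Q_n \in \text{GL}_{\dim(\M)}(\R)$ such that $D_n = Q_n E_n$ on all of $\M_n$. The strategy is to establish a uniform bound on $|Q_n|$, extract a convergent subsequence $Q_n \to Q$, and then invoke Lemma~\ref{lm:uniqueness} applied to the single sequence $D_n$.

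The heart of the argument is bounding $|Q_n|$, and the key tool is the metric compatibility of $\nabla_n$. Because $\nabla_n$ is metrically-consistent and $E_n$, $D_n$ are parallel, the Gram matrices $G_n^E := \langle E_n, E_n\rangle_{\g_n}$ and $G_n^D := \langle D_n, D_n\rangle_{\g_n}$ are constant on $\M_n$ and satisfy $G_n^D = Q_n^T\, G_n^E\, Q_n$. The same reasoning shows that $G^\M := \langle E^\M, E^\M\rangle_\g$ and $G^\N := \langle E^\N, E^\N\rangle_\h$ are constant positive-definite matrices on $\M$ and $\N$. The asymptotic rigidity of $F_n$ (Item~3 of Definition~\ref{df:convergence}) implies that $F_n^\star \g_n$ approaches $\g$ in a suitable mean sense on $A_n^\e$, and combined with $F_n^\star E_n \to E^\M$ in $L^p$, this yields
\[
G_n^E\cdot\textVol_\g(A_n^\e) \;=\; \int_{A_n^\e}\langle F_n^\star E_n, F_n^\star E_n\rangle_{F_n^\star \g_n}\,\Volume \;\longrightarrow\; G^\M\cdot\textVol_\g(\M),
\]
so $G_n^E\to G^\M$. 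Arguing analogously with $G_n$ and $D_n$ gives $G_n^D\to G^\N$. Since $G^\M$ is positive definite and $G_n^D$ is bounded, the relation $G_n^D=Q_n^T G_n^E Q_n$ forces $|Q_n|$ to be uniformly bounded.

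Passing to a (not relabeled) subsequence, $Q_n\to Q$, and the limiting identity $G^\N = Q^T G^\M Q$ forces $Q\in \text{GL}_{\dim(\M)}(\R)$. Then $F_n^\star D_n = Q_n\,F_n^\star E_n \to Q E^\M$ in $L^p$, while by hypothesis $G_n^\star D_n \to E^\N$ in $L^p$. The frame field $Q E^\M$ is a constant linear combination of the $\nabla^\M$-parallel frame $E^\M$ and is therefore itself $\nabla^\M$-parallel. Applying Lemma~\ref{lm:uniqueness} to the single sequence $D_n$, with limiting frame fields $QE^\M$ on $\M$ and $E^\N$ on $\N$, yields $H_\star(QE^\M) = E^\N$, i.e., $Q\,H_\star E^\M = E^\N$.

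Equivalently, $H_\star E^\M = Q^{-1} E^\N$ is a constant linear combination of the $\nabla^\N$-parallel frame $E^\N$, hence itself $\nabla^\N$-parallel, as required. The main obstacle is the uniform bound on $|Q_n|$: it is precisely here that the metric compatibility of the connections is essential, since it makes the Gram matrices of the parallel frames constant on $\M_n$ and thereby accessible to the approximate rigidity of $F_n$ and $G_n$ built into Definition~\ref{df:convergence}.
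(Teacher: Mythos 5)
Your proposal is correct and shares the paper's overall skeleton (write $D_n=Q_nE_n$ with $Q_n$ constant, bound $Q_n$, extract a convergent subsequence, show $F_n^\star D_n\to QE^\M$ in $L^p$, and close by invoking Lemma~\ref{lm:uniqueness}), but the crucial step --- the uniform bound on $Q_n$ --- is obtained by a genuinely different argument. The paper first normalizes $E^\M$ and $E^\N$ to be orthonormal, then uses the sets $A_n^\e$, $R_n^\e$, $S_n^\e$ together with Hadamard's inequality to exhibit, for large $n$, a single point $x_n$ at which both $E_n(x_n)$ and $D_n(x_n)$ lie in an $O(\e)$-neighborhood of the orthonormal frames of $(\M_n,\g_n)$, forcing $Q_n$ into an $O(\e)$-neighborhood of $\SO{d}$. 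You instead exploit the constancy of the Gram matrices $G_n^E,G_n^D$ of parallel frames under a metric connection, prove $G_n^E\to G^\M$ and $G_n^D\to G^\N$ by an integral identity, and bound $Q_n$ algebraically from $G_n^D=Q_n^TG_n^EQ_n$; this is conceptually cleaner, makes the role of metric compatibility explicit, and yields the invertibility of $Q$ directly from $G^\N=Q^TG^\M Q$ rather than a posteriori from $QH_\star E^\M=E^\N$ being a frame. The one place you are too brisk is the convergence $G_n^E\to G^\M$: the integrand $\langle F_n^\star E_n,F_n^\star E_n\rangle_{F_n^\star\g_n}$ is measured in the pullback metric, whereas the $L^p$ convergence of $F_n^\star E_n$ to $E^\M$ is in $\g$, and Item~3 of Definition~\ref{df:convergence} only controls $\dist(dF_n,\SO{\g,\g_n})$ in the mean; you should either restrict the integral to $A_n^\e$, where $F_n^\star\g_n$ and $\g$ are uniformly $(1+\e)$-comparable, and then let $\e\to0$ after $n\to\infty$ (using the uniform integrability of $|F_n^\star E_n|_\g^2$ coming from its $L^{p/2}$ convergence to handle the shrinking complement), or pass to a subsequence along which both $F_n^\star E_n\to E^\M$ and $\dist(dF_n,\SO{\g,\g_n})\to0$ almost everywhere and evaluate at a common good point. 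With that detail supplied, the proof is complete.
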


\begin{proof}
Given a Riemannian manifold $(X,\g)$, denote the subset of orthonormal frames of the frame bundle $\text{Fr}_p(TX)$ at a point $p$ by $O_p(X,\g)$.
Since $E^\M$ and $E^\N$ are parallel with respect to $\nabla^\M$ and $\nabla^\N$, which are $\g$- and $\h$- metrically-consistent connections, we can assume without loss of generality that $E^\M(p)\in O_p(\M,\g)$ and $E^\N(q)\in O_q(\N,\h)$ for every $p\in \M$ and $q\in \N$. If not, multiply $E^\M$ (and likewise $E^\N$) by a constant matrix $R$ such that $R E^\M$ is orthonormal.

$E_n$ and $D_n$ are both parallel with respect to the same connection $\nabla_n$, hence there exists a constant matrix $Q_n\in \text{GL}_{\dim(\M)}(\R)$ such that $Q_n E_n = D_n$. We now prove that the sequence $Q_n$ is bounded.

Fix some small $\e>0$, and denote
\[
R_n^\e = \BRK{x\in A_n: \dist\brk{F_n^\star E_n(x), O_x(\M,\g)} < \e }
\]
Since $F_n^\star E_n$ converges in $L^p$ to $E^\M\in O(\M,\g)$, it follows that 
\[
\lim_{n\to\infty} \textVol_\g \brk{\M\setminus R_n^\e}=0.
\]
Using Lemmas~\ref{lm:Hadamard}-\ref{lm:A_n^e},
\[
\begin{split}
\textVol_{\g_n} \brk{\M_n\setminus F_n(R_n^\e\cap A_n^\e)} 
	&= \textVol_{\g_n} \brk{\M_n\setminus F_n(A_n^\e)} + \textVol_{\g_n} \brk{F_n(A_n^\e) \setminus F_n(R_n^\e)} \\
	&\le \textVol_{\g_n} \brk{\M_n\setminus F_n(A_n^\e)} + (1+\e)^d \textVol_{\g} \brk{A_n^\e \setminus R_n^\e}\\
	& \underset{n\to\infty}{\longrightarrow} 0,
\end{split}
\]

Similarly, denoting
\[
S_n^\e = \BRK{y\in B_n: \dist\brk{G_n^\star D_n(y), O_y(\N,\h)} < \e },
\]
we obtain
\[
\lim_{n\to\infty} \textVol_{\g_n} \brk{\M_n\setminus G_n(S_n^\e)\cap F_n(A_n^\e)} = \lim_{n\to\infty} \textVol_{\g_n} \brk{\M_n\setminus G_n(S_n^\e\cap H_n(A_n^\e))}=0.
\]
In particular, for $n$ large enough, the set
\[
F_n(A_n^\e)\cap F_n(R_n^\e) \cap G_n(S_n^\e)
\]
is non-empty. For every point $x_n$ in it, we have the following:
\begin{enumerate}
\item $F_n^\star E_n(F_n^{-1} (x_n))$ is in an $\e$-neighborhood of the orthonormal frames $O_{F_n^{-1}(x_n)}(\M,\g)$.
\item Since $F_n^{-1}(x_n)\in A_n^\e$, $d_{F_n^{-1}(x_n)}F_n$ is in an $\e$-neighborhood of $\SO{\g,\g_n}$. 
\item  $G_n^\star D_n(G_n^{-1} (x_n))$ is in an $\e$-neighborhood of the orthonormal frames $O_{G_n^{-1}(x_n)}(\N,\h)$.
\item $d_{G_n^{-1}(x_n)}G_n$ is in an $\e$-neighborhood of $\SO{\h,\g_n}$.
\end{enumerate}
Therefore, both $E_n(x_n)$ and $D_n(x_n)$ are in some $O(\e)$-neighborhood of $O_{x_n}(\M_n,\g_n)$, where $O(\e)$ is independent of $n$. It follows that $Q_n$ is in $O(\e)$-neighborhood of $\SO{d}$, and in particular, it is uniformly bounded. 

It follows that there exists a converging subsequence (not relabeled) $Q_n\to Q$, and
\[
\begin{split}
\int_{A_n} |F_n^\star D_n-QE|^p_\g \Volume 
&= \int_{A_n} |Q_n (F_n^\star E_n) -QE^\M|^p_\g \Volume \\
&\hspace{-2cm}\le C\int_{A_n}  |Q (F_n^\star E_n-E^\M)|^p_\g +|(Q_n-Q) (F_n^\star E_n)|^p_\g \Volume\\
&\hspace{-2cm}\le C|Q|^p\int_{A_n} |F_n^\star E_n-E^\M|^p_\g \Volume + C|Q_n-Q|^p \int_{A_n} |F_n^\star E_n|^p_{\g} \Volume \\
&\hspace{-2cm}\le C\brk{\int_{A_n} |F_n^\star E_n-E^\M|^p_\g \Volume + |Q_n-Q|^p}\to 0,
\end{split}
\]
where we used the uniform boundedness of $F_n^\star E_n$.

If follows that 
\[
 (\M_n,\g_n,D_n) \to (\M,\g,Q E^\M) \textand (\M_n,\g_n,D_n) \to (\N,\mathfrak{h},E^\N),
\]
By Lemma~\ref{lm:uniqueness}, $Q (H_\star E^\M) = H_\star (QE^\M) = E^\N$. 
In particular, since $E^\N$ is a frame field, $Q$ is not singular, and the proof is complete.
\end{proof}
\bigskip

\section{Discussion}
\label{sec:discussion}

In this paper we prove that the limit of a specific sequence of manifolds with an increasing number of edge-dislocations is a smooth flat manifold endowed with a metrically-compatible non-symmetric flat connection, i.e. a Weitzenb\"ock manifold. 
Both the limit manifold and the limit connection are defined uniquely by the parameters $a$,$b$ and $\e$.
In particular, the limit remains unchanged if the dislocations in the sequence $\M_n$ are not located at the centers of each building block.

Moreover, the dislocation magnitude $\e$ of each block is determined by two parameters: $\e=2d\sin \theta$, where $\theta$ is the disclination angle and $d$ is the length of the dislocation line.
The metric limit is indifferent to the values of $d$ and $\theta$ as long as $\e$ remains fixed, and these values may change from one $\M_n(i,j)$-block to another.

For the connection limit to  hold, there is an additional constraint: -- the lengths $d_n$ of the dislocation lines must tend to zero faster than $n^{-1}$ (in our construction $d_n=O(n^{-2})$ since $\theta$ is fixed and $\e_n=O(n^{-2})$). If the length of the dislocation line is comparable to the cell size, the removal of the dislocation lines from $\tM_n$ changes the distance function significantly (see the proof of  Corollary~\ref{cy:conv}). This observation is consistent with the fact that the notion of ``curvature dipole" is ambiguous when it is not clear to which dipole each monopole (singularity) belongs.

To conclude, the limit is determined by the orientation of the dislocation lines and the magnitude of the dislocations -- that is to say, by the parallel vector fields, which are the Burgers vector fields of individual $\M_n(i,j)$-blocks. In our case the Burgers vector fields are equal to  $(\e/n^2)\,\pl_y$ (they can be calculated from the monodromy, see \cite{KMS14}). As a result, the total Burgers vector associated with a loop encircling $\alpha\beta n^2$ dislocations, $0<\alpha,\beta<1$, is $\alpha \beta\e\, \partial_y$.

The torsion field of the limit connection $\nabla$ is given by
\[
T = \frac{1}{r} \,dr\wedge d\vp\otimes {\pl_\vp}.
\]
It is the density of the Burgers field in the following sense: 
let $\Pi^p$ be the parallel transport operator to an arbitrary reference point $p$. Let 
\[
D = [r_1,r_1 + \alpha b]\times[\vp_1, \vp_1 + \beta \e/b] \subset\N
\]
a domain whose boundaries are $\nabla$-geodesics.
Using the fact that $r^{-1}\partial_\theta$ is a $\nabla$-parallel vector field,
\[
\int_D \Pi^p T = \alpha\beta\e \,(r^{-1}\partial_\theta)_p,
\]
which is the image under $F_n$ of $\alpha\beta\e\,\partial_y$ at $F_n(p)$.

Every metrically compatible connection of a two-dimensional manifold can be written as
\[
\nabla_X Y = \nabla_X^{LC} Y + \g(X,Y)V - \g(V,Y)X
\]
for some vector field $V$, where $\g$ is the metric and $\nabla^{LC}$ is the Levi-Civita connection  (see \cite{AT04} for details).
In our case, a simple calculation shows that this vector field  is $V=r^{-1}\pl_r$. $V$ can be interpreted, in a sense, as the continuum limit of the dislocation lines in $\M_n$.

While in our example, both the connections $\nabla_n$ on $\M_n$ and $\nabla$ on $\M$ admit global parallel frame fields, the notion of convergence given in Definition~\ref{df:convergence} relies only on the existence of local parallel frame fields. This gives some flexibility to include convergence to manifolds endowed with connections that are only locally-flat, for example,  edge-dislocations on a cone. 

We conclude this paper by raising \Cy{several} natural questions, which will be dealt in subsequent publications:

\begin{enumerate}
\item The example presented in this paper is a very specific one, with all the dislocations aligned in the same direction, resulting in a fairly simple limit torsion field. What other torsion fields can be obtained as limits of edge-dislocations in the sense of Definition~\ref{df:convergence}? For example, which  simply connected Weitzenb\"ock manifolds can be obtained as a limit of locally-flat Riemannian manifolds, each endowed with its Levi-Civita connection?

\Cy{
\item Another natural extension of this work is to account for continuous-distributed screw-dislocations, or more generally, distributed dislocations of both types. Screw-dislocations differ from edge-dislocations in that they are inherently three-dimensional.
The notion of convergence developed in this paper is independent of dimension and is therefore expected to apply in the more general case. On the other hand, the construction of a manifold with singular defects presented in this paper is two-dimensional, and as such cannot generate screw-dislocations.
}

\item In what way does the limit connection (or equivalently, the torsion field) manifest in the mechanical or elastic properties of the manifold? Assuming that the manifolds $\M_n$ represent elastic bodies with some elastic energy density, what is the limit elastic energy density on the limit manifold $\N$? This relates to a general question of $\Gamma$-convergence of elastic energy functionals in a limit of converging metrics.
\end{enumerate}

\paragraph{Acknowledgements}
We are very grateful to Jake Solomon for many fruitful discussions and enlightening ideas throughout  the evolution of this paper.
We are also grateful to Marcelo Epstein for suggesting us this homogenization problem, and to Pavel Giterman for his valuable comments.
This work was partially funded by the Israel Science Foundation and by the Israel-US Binational Foundation.

\appendix
\section{Proof of Lemma~\ref{lm:2}}

In this appendix we prove the Lemma:

\begin{quote}
{\itshape
Let $a,b,\e>0$ and $\theta\in(0,\pi/2)$ be given. 
Let $T_{n,i,j}$ be the natural intrinsic distance preserving mapping,
\[
T_{n,i,j} : \partial \N_n(i,j) \to \partial \tM_n(i,j).
\] 
Then, there exists a constant $c>0$ independent of $n,i,j$, 
such that
\[
\max_{x,y\in \partial\N_n(i,j)} |d(x,y) - d_n(T_{n,i,j}(x),T_{n,i,j}(y))| < \frac{c}{n^2}.
\]
}
\end{quote}

Recall that
\[
\tM_n(i,j) = \tR(a_{n,i},b_i,\theta,\e_n)
\Textand
\N_n(i,j) = \N(a_{n,i},b_i,\e_n),
\]
where $a_{n,i},b_i = O(1/n)$ and $\e_n = O(1/n^2)$. Here 
$\tR(\alpha,\beta,\theta,\delta)$ is the building block of  our locally-flat manifolds with defects, whereas 
$\N(\alpha,\beta,\delta)$ is a the sector of angle $\delta/\beta$ of an annulus of inner radius $\alpha\beta/\delta$ and outer radius $\alpha\beta/\delta + \beta$.

To prove the lemma, it is sufficient to prove that for
\[
\frac{c}{n} < \alpha, \beta < \frac{C}{n} \textand \delta<\frac{C'}{n^2},
\]
where $c,C,C'$ are  positive constants, the natural intrinsic-distance preserving map,
\[
T: \pl \N(\alpha,\beta,\delta) \to \pl \tR(\alpha,\beta,\theta,\delta),
\]
satisfies
\begin{align}
\label{eq:disT}
\max_{x,y\in \partial\N(\alpha,\beta,\delta)} |d_\N(x,y) - d_{\tR}(T(x),T(y))| < \frac{\tilde{C}}{n^2},
\end{align}
for  $\tilde{C}$ that depends only on $c,C$ and $C'$. Here $d_{\N}$ and $d_{\tR}$ are the respective  distance functions in $\N(\alpha,\beta,\delta)$ and $\tR(\alpha,\beta,\theta,\delta)$.

The proof is based on showing that for large $n$ both $\N(\alpha,\beta,\delta)$ and $\tR(\alpha,\beta,\theta,\delta)$ are almost isometric to a Euclidean rectangle,  $R(\alpha,\beta)$, with edges of length $\alpha,\beta$.
We construct two mappings, $S: \tR(\alpha,\beta,\theta,\delta)\to R(\alpha,\beta)$ and $S':R(\alpha,\beta)\to\N(\alpha,\beta,\delta)$, such that $T^{-1}:\pl\tR(\alpha,\beta,\theta,\delta)\to\pl\N(\alpha,\beta,\delta)$ is the restriction of $S'\circ S$ to the boundary. We then show that the distortions of both $S$ and $S'$ are $O(n^{-2})$, hence so is the distortion of their composition.

\paragraph{Construction of $S'$:} We endow both $\N(\alpha,\beta,\delta)$ and $R(\alpha,\beta)$ with Euclidean coordinates,
\[
\N(\alpha,\beta,\delta) = \BRK{(r\cos t,r\sin t) : (r,t)\in \Brk{\frac{\alpha\beta}{\delta},\frac{\alpha\beta}{\delta}+\beta}\times\Brk{0,\frac{\delta}{\beta}}},
\]
\[
R(\alpha,\beta) = \BRK{ (x,y)\in \Brk{\frac{\alpha\beta}{\delta},\frac{\alpha\beta}{\delta}+\beta}\times\Brk{0,\alpha}},
\]
and define $S'$ by
\[
S'(x,y) := \brk{ x\cos \brk{\frac{y\delta}{\alpha\beta}}, x\sin \brk{\frac{y\delta}{\alpha\beta}}}.
\]
This mapping is bijective. For all $(x,y)\in R(\alpha,\beta)$,
\[
| S'(x,y) - (x,y) |^2 = x^2\brk{1-\cos \brk{\frac{y\delta}{\alpha\beta}}}^2 + \brk{x\sin \brk{\frac{y\delta}{\alpha\beta}} - y}^2 = O(n^{-2}),	
\]
where we used the fact that $x = O(1)$, $\alpha,\beta, y = O(n^{-1})$ and $\delta = O(n^{-2})$.
It follows that for every two points $p,q\in R(\alpha,\beta)$,
\begin{align}
\label{eq:disS'}
|d_R(p,q) - d_{\R^2}(S(p),S(q))| < \frac{\tilde{C}}{n^2}.
\end{align}
Observe that since $\N(\alpha,\beta,\delta)$ is not convex $d_{\N}$ is not just a restriction of the Euclidean distance $d_{\R^2}$. 
The distance $d_\N$  between any two points in $\N(\alpha,\beta,\delta)$ cannot, however, be larger than the Euclidean distance in $\R^2$ plus $O\brk{\frac{\alpha\delta^2}{\beta^2}}=O(n^{-3})$. Hence the estimate~\eqref{eq:disS'} holds also with $d_{\N}$ replaced by $d_{\R^2}$.

\paragraph{Construction of $S$:}
$\tR(\alpha,\beta,\theta,\delta)$ can be constructed by gluing two euclidean hexagons.
We  define a bijective map $S:\tR(\alpha,\beta,\theta,\delta)\to R(\alpha,\beta)$ by defining it in an appropriate way on each hexagon.

Let $\tR_I$ be one of the hexagons, 
with the following Euclidean coordinates:
\[
\tR_I = \BRK{(x,y) : x\in[0,\beta], y\in
							\begin{cases}
							[0,a_1] 			& x\in[0,b_1]\\
							[0,a_1+\tan(x-b_1)]	& x\in(b_1,b_1+b_2\cos\vp]\\
							[0,a_1+\delta/2]		& x\in(b_1+b_2\cos\vp,\beta]
							\end{cases}
		}
\]
where $b_2=\frac{\delta}{2\sin\vp}$ is the distance between the singular points, $\beta=b_1+b_2\cos\vp+b_3$, and $a_1<a$ (the respective length in the other hexagon is $a-a_1$).
Denote
\[
a(x) = \sup \{ y: (x,y)\in \tR_\Raz{I}\}.
\]
Now define a bijective mapping $S'_I:\tR_I\to R(a_1,\beta)$ by
\[
S'_I((x,y)) = \brk{x,\frac{a_1}{a(x)}y}
\]
(we use the fact that $\theta\le\pi/2$, otherwise we construct a slightly different coordinate system).
A similar construction is used to define $S'_{II}: \tR_{II} \to R(\alpha-a_1,\beta)$. Gluing both maps together we get a bijective $S': \tR(\alpha,\beta,\theta,\delta)\to R(\alpha,\beta)$.
Now,
\[
\begin{split}
|S'_I((x,y))-(x,y)| &= \left| \frac{a_1}{a(x)}y-y \right| \le \brk{ a_1+\frac{\delta}{2}} \brk{ 1- \frac{a_1}{a_1+\frac{\delta}{2}}} = O(n^{-2}),
\end{split}
\]
and similarly for $S'_{II}$.
Like with $\N(\alpha,\beta,\delta)$, the hexagons are not convex, but it can easily be seen that 
\[
|d_{\tR_I}(x,y)-d_{\R^2}|\le\delta/2=O\brk{n^{-2}},
\]
hence for every two points $P,Q\in \tR_I$,
\begin{align}
\label{eq:disS}
|d_{\tR_I}(P,Q) - d_R(S'_I(P),S'_I(Q))| < \frac{\tilde{C}}{n^2},
\end{align}
and similarly for the second hexagon.

By construction $T^{-1}$ is a composition of the restriction of $S$ and $S'$ to the boundaries, hence by \eqref{eq:disS'},\eqref{eq:disS} we obtain \eqref{eq:disT}, which completes the proof.
$\blacksquare$

\section{Proof of Lemma~\ref{lm:3}}

In this appendix we prove the Lemma:

\begin{quote}
{\itshape
For every  $n\in\bbN$ and $p,q\in Y_n$, the shortest path in $\N$ connecting $p$ and $q$ intersects at most $3n$ out of the $n^2$ sectors $\N_n(i,j)$. 
Likewise, for every  $n\in\bbN$ and $p,q\in X_n$, the shortest path in $\tM_n$ (viewed as a metric space) connecting $p$ and $q$ intersects at most $3n$ out of the $n^2$ ``rectangles" $\tM_n(i,j)$. }
\end{quote}

Let $p,q\in Y_n$ and let $\gamma$ be the shortest path in $\N$ between them.
Assume that $\gamma$ intersects $k$ sectors $\N_n(i,j)$, and denote their indices by
\[
(i_1,j_1),\ldots,(i_k,j_k),
\]
where $p\in \N_n(i_1,j_1)$ and $q\in \N_n(i_k,j_k)$.

We prove that $k\le 3n$ by observing that $j_r-j_{r+1}$ never changes sign (in the weak sense, it may be $0$), and $i_r - i_{r+1}$ does not change sign more than once, which immediately implies $k\le 3n$.
This follows from the fact that the shortest path between a point in $\N_n(i,j)$ and a point in $\N_n(i',j)$ only passes through sectors $\N_n(\cdot, j)$, and a shortest path between a point in $\N_n(i,j)$ and a point in $\N_n(i,j')$ only passes through sectors $\N_n(i',\cdot)$ with $i'\le i$.
The same reasoning holds also for $\tM_n$, with its building blocks $\tM_n(i,j)$.
$\blacksquare$


\bibliographystyle{amsalpha}

\begin{thebibliography}{10}

\bibitem{AT04}
\newblock I.~Agricola and C.~Thier,
\newblock The geodesics of metric connections with vectorial torsion,
\newblock \emph{Ann. Global Anal. Geom.}, \textbf{26} (2004), 321--332.

\bibitem{BBS55}
\newblock B.~Bilby, R.~Bullough and E.~Smith,
\newblock Continuous distributions of dislocations: A new application of the
  methods of {Non-Riemannian} geometry,
\newblock \emph{Proc. Roy. Soc. A}, \textbf{231} (1955), 263--273.

\bibitem{BS56}
\newblock B.~Bilby and E.~Smith,
\newblock Continuous distributions of dislocations. {III},
\newblock \emph{Proc. Roy. Soc. Edin. A}, \textbf{236} (1956), 481--505.

\bibitem{DoC92}
\newblock M.~{Do Carmo},
\newblock \emph{Riemannian geometry},
\newblock Birkhauser, 1992.

\bibitem{Gar07}
\newblock D.~J.~H. Garling,
\newblock \emph{Inequalities: A Journey into Linear Analysis},
\newblock Cambridge University Press, 2007.

\bibitem{GHKM13}
\newblock J.~Guven, J.~Hanna, O.~Kahraman and M.~M\"uller,
\newblock Dipoles in thin sheets,
\newblock \emph{Eur. Phys. J. E}, \textbf{36} (2013), 106.

\bibitem{Hei05}
\newblock J.~Heinonen,
\newblock \emph{Lectures on Lipschitz Analysis},
\newblock Jyv{\"a}skyl{\"a}n Yliopistopaino, 2005.

\bibitem{Kon55}
\newblock K.~Kondo,
\newblock Geometry of elastic deformation and incompatibility,
\newblock in \emph{Memoirs of the Unifying Study of the Basic Problems in
  Engineering Science by Means of Geometry} (ed. K.~Kondo), vol.~1, 1955,
\newblock 5--17.

\bibitem{Kro81}
\newblock E.~Kr\"oner,
\newblock The physics of defects,
\newblock in \emph{Les Houches Summer School Proceedings} (eds. R.~Balian,
  M.~Kleman and J.-P. Poirier),
\newblock North-Holland, Amsterdam, 1981.

\bibitem{KMS14}
\newblock R.~Kupferman, M.~Moshe and J.~Solomon,
\newblock Metric description of defects in amorphous materials, 2014,
\newblock Submitted to Arch. Rat. Mech. Anal.

\bibitem{Nye53}
\newblock J.~Nye,
\newblock Some geometrical relations in dislocated crystals,
\newblock \emph{Acta Met.}, \textbf{1} (1953), 153--162.

\bibitem{OY14}
\newblock A.~Ozakin and A.~Yavari,
\newblock Affine development of closed curves in {Weitzenb\"ock} manifolds and
  the burgers vector of dislocation mechanics,
\newblock \emph{Math. Mech. Solids}, \textbf{19} (2014), 299--307.

\bibitem{Pet06}
\newblock P.~Petersen,
\newblock \emph{Riemannian geometry},
\newblock 2nd edition,
\newblock Springer, 2006.

\bibitem{SN88}
\newblock H.~Seung and D.~Nelson,
\newblock Defects in flexible membranes with crystalline order,
\newblock \emph{Phys. Rev. A}, \textbf{38} (1988), 1005--1018.

\bibitem{Vol07}
\newblock V.~Volterra,
\newblock Sur l'\'equilibre des corps \'elastiques multiplement connexes,
\newblock \emph{Ann. Sci. Ecole Norm. Sup. Paris 1907}, \textbf{24} (1907),
  401--518.

\bibitem{Wan67}
\newblock C.-C. Wang,
\newblock On the geometric structures of simple bodies, a mathematical
  foundation for the theory of continuous distributions of dislocations,
\newblock \emph{Arch. Rat. Mech. Anal.}, \textbf{27} (1967), 33--93.

\end{thebibliography}

\providecommand{\href}[2]{#2}
\providecommand{\arxiv}[1]{\href{http://arxiv.org/abs/#1}{arXiv:#1}}
\providecommand{\url}[1]{\texttt{#1}}
\providecommand{\urlprefix}{URL }

\end{document}